\documentclass[11pt,reqno,oneside]{amsart}
\usepackage[a4paper, total={6in, 9in}]{geometry}
\usepackage[usenames]{color}
\usepackage{amsmath,pdfsync,verbatim,graphicx,epstopdf,enumerate}
\usepackage{todonotes}
\usepackage{mathtools}  
\mathtoolsset{showonlyrefs} 
\usepackage{amsmath,amscd, amssymb, amsthm, mathrsfs}
\usepackage[abbrev,nobysame,alphabetic]{amsrefs}
\usepackage[colorlinks=true]{hyperref}
\usepackage{cancel}
\usepackage{accents}
\usetikzlibrary{patterns}

\usepackage{nicefrac}
\usepackage{comment}
\usepackage{float}
\usepackage{tikz}
	\usetikzlibrary{shapes,arrows.meta,quotes,angles,positioning,patterns,decorations.pathreplacing,calc}
\def\MarkRightAngle[size=#1](#2,#3,#4){
 \draw ($(#3)!#1!(#2)$) -- 
       ($($(#3)!#1!(#2)$)!#1!90:(#2)$) --
       ($(#3)!#1!(#4)$);
}

\hypersetup{allcolors=blue}
 \DeclarePairedDelimiter\pair{\langle}{\rangle}
 \DeclareMathOperator{\vspan}{span}
    \newcommand{\e}{\epsilon}

\newcommand{\I}{\mathrm{i}}
\DeclareMathOperator{\tr}{\mathrm{tr}}

\newcommand{\PD}{\partial}
\newcommand{\N}{\mathbb{N}}

\newcommand{\Beq}{\begin{equation}}
	\newcommand{\Eeq}{\end{equation}}
\newcommand{\beq}{\begin{equation*}}
	\newcommand{\eeq}{\end{equation*}}
\newcommand{\bal}{\begin{align}}
	\newcommand{\eal}{\end{align}}

\newcommand{\K}{\kappa}
\usepackage{mathtools}

\newcommand{\p}{\partial}

\newtheorem{theorem}{Theorem}[section]

\newtheorem{lemma}[theorem]{Lemma}
\newtheorem{proposition}[theorem]{Proposition}

\theoremstyle{definition}
\newtheorem{definition}{Definition}[section]

\newtheorem{remark}{Remark}[section]

\newcommand{\Lc}{\mathcal{L}}
\newcommand{\R}{\mathbb{R}}

\newcommand{\Rn}{\mathbb{R}^n}
\newcommand{\norm}[1]{\lVert #1 \rVert}
\newcommand{\abs}[1]{\lvert #1 \rvert}

 \newcommand{\lr}{\langle}
\newcommand{\rn}{\rangle}

\newcommand{\supp}{\mathop{\rm supp}}

\newcommand{\df}{\mathrm{d}}
\newcommand{\nrm}[2][]{ \| {#2} \|_{#1}}
\newcommand{\agl}[1][\cdot]{ \langle {#1} \rangle}
\allowdisplaybreaks

\numberwithin{equation}{section}

\usepackage{todonotes}
\newcommand{\HOX}[1]{\todo[noline,color=white,bordercolor=white,size=\footnotesize]{#1}}

\title[]{Coefficient Determination for Non-Linear Schr\"odinger Equations on manifolds}

\author{Matti Lassas}
\address{Department of Mathematics and Statistics, University of Helsinki, Box 68, Helsinki, 00014, Finland}
\email{Matti.Lassas@helsinki.fi}
\author{Lauri Oksanen} 
\address{Department of Mathematics and Statistics, University of Helsinki, Box 68, Helsinki, 00014, Finland}
\email{lauri.oksanen@helsinki.fi }

\author{Suman Kumar Sahoo}
\address{Department of Mathematics, Indian Institute of Technology, Bombay}
\email{suman@math.iitb.ac.in}

\author{Mikko Salo}
\address{University of Jyv\"askyl\"a, Finland}
\email{mikko.j.salo@jyu.fi}
\author{ALEXANDER TETLOW}
\address{Department of Mathematics and Statistics, University of Helsinki, Box 68, Helsinki, 00014, Finland}
\email{}

\begin{document}
\begin{abstract}
We consider an inverse problem of recovering the unknown coefficients $\beta(t,x)$ and $V(t,x)$ appearing in a time-dependent nonlinear Schr\"odinger equation $ (\I \p_t +\Delta +V)u + \beta u^2=0$ in $(0,T) \times M$, on Euclidean geometry as well as on Riemannian geometry. We consider measurements in  $\Omega \subset M$ that is a neighborhood of the boundary of $M$ and the source-to-solution map $ L_{\beta, V}$ that maps a source $f$ supported in $ \Omega\times (0,T) $ to the restriction of the solution $u$ in $ \Omega\times (0,T) $. We show that the map $L_{\beta, V}$ uniquely determines the time-dependent potential and the coefficient of the non-linearity, for the above non-linear Schr\"odinger equation and for the Gross-Pitaevskii equation, with a cubic non-linear term $\beta |u|^2 \, u$, that is encountered in quantum physics.
\end{abstract}
\subjclass[2010]{Primary 35R30, 31B20, 31B30, 35J40}
	\subjclass[2020]{Primary 35R30, 31B20, 31B30, 35J40}
	\keywords{Calder\'{o}n problem, nonlinear Schr\"odinger equation, Gaussian beam}
\maketitle
\section{Introduction and main results}

Let $T>0$ and $(M,g)$ be a compact Riemannian manifold of dimension $n\ge 2$ with smooth boundary. Let $\Omega$ be a neighbourhood of boundary $\PD M$, and let $V\in C_c^{\infty}\left(0,T)\times M\setminus\Omega\right)$, and  $\beta\in C_c^{\infty}\left(0,T)\times M\setminus\Omega\right)$. Moreover, $\beta$ is non-zero almost every where in the support of $V$. We now consider the following initial boundary value problem for non-linear Schr\"odinger equation:
\begin{equation}\label{main_IBVP}
    \begin{aligned}
        \left(\I \PD_t +\Delta_g +V(t,x)\right)u(t,x)+\beta(t,x) u^2(t,x)=f(t,x) \quad &\mbox{in} \quad (0,T) \times M,\\
        u(t,x)=0 \quad & \mbox{on} \quad (0,T)\times \PD M,\\
        u(0,x)=0 \quad & \,\mbox{in} \quad\, x\in M.
    \end{aligned}
\end{equation}
Here $ \Delta_g$ stands for Laplace Beltrami operator on $M$ and in local coordinates it is given by
\begin{align*}
    \Delta_g u := |g|^{-\frac{1}{2}}\partial_{j}(|g|^{\frac{1}{2}} g^{jk} \partial_{k} u).
\end{align*}
When $g$ is the Euclidean metric, we denote $\Delta_g=\Delta= \sum_{j=1}^n \PD^2_j$.
The problem we are  interested in is to recover unknown coefficients $V$ and $\beta$ from the knowledge of source to solution map $L_{V,\beta}$ defined as:
\begin{align}\label{eq_sos_map}
    L_{V,\beta}f:= u|_{(0,T) \times \Omega},
\end{align}
where the sources are in $\{ f\in \mathcal{H}:\mbox{supp} f\subset (0,T) \times \Omega\}$. Here $\mathcal{H}$ is a sufficiently small neighbourhood of the zero function in the space $H_{00}^{2\K}$, and $u$ is
the unique solution of the non-linear Schr\"odinger equation \eqref{main_IBVP} corresponding to the source term $f$. The function space $H_{00}^{2\K}$ is defined as 
\begin{align}
    H_{00}^{2\K}:= \{f\in H^{2\K}((0,T)\times \Omega): \PD^m_tf|_{t=0}=0 \quad \mbox{for \, $0\le m \le 2\K-1$} \}.
\end{align}

The aim of this article is to prove the unique determination of the potential $V(t,x)$ and the coefficient $\beta(t,x)$ from the knowledge of source to solution map $L_{V,\beta}$ in the Euclidean setting as well as geometric setting. We now state our main result in the Euclidean setting:
\begin{theorem}\label{th:Eulidean}
    Let $n\ge 2$, $T>0$ and $M\subset \Rn$  be a convex and compact set with a  smooth boundary, and $\Omega$ be a neighbourhood of $\PD M$. Suppose $\beta_j \in C_c^{\infty}\left(0,T)\times M\setminus\Omega\right)$ and $V_j\in C_c^{\infty}\left(0,T)\times M\setminus\Omega\right)$, for $j=1,2$. Moreover assume that $\beta_j$ are non-zero everywhere in the support of $V_j$ for $j=1,2$. Then the following holds true.
    \begin{align}
        L_{\beta_1, V_1}= L_{ \beta_2,V_2} \implies (\beta_1,V_1)=(\beta_2,V_2) \quad \mbox{in $M$}.
    \end{align}
\end{theorem}
We now  state our main result on a Riemannian manifold $(M,g)$ under the following geometric assumption on $(M,g)$.
\begin{definition}\label{def_admisssible}
    We say that a manifold $M$ is admissible if every point  $p\in M$ is generated by admissible geodesics. We say that a point $p\in M $ is generated by admissible geodesics if there exist unit vectors $\xi_1, \xi_2\in T_pM$ with $ \agl[\xi_1,\xi_2]_g=0$ such that for
    \[  \xi_0= \lambda'\xi_1+\lambda \xi_2,\quad \lambda'=\sqrt{1-\lambda^2},\quad \mbox{ $\lambda>0$ any small number},\] we have $ \gamma_{p,\xi_0}\cap \gamma_{p,\xi_1}\cap \gamma_{p,\xi_2}= \{p\}.$ We further assume that each $ \gamma_{p.\xi_j}$ is non trapping, non-tangential geodesic which does not self intersect at $p\in M$. 
Here $\gamma_{p,\xi}$ is the geodesic with the initial data $(p,\xi) \in T M$.
\end{definition}
   Let us now take a moment to consider some examples of admissible manifolds. First, it is clear that if $M$ is a closed, bounded domain in $\Rn$, then it satisfies Definition \ref{def_admisssible}. Similarly, if $M$ is a simple manifold, it follows immediately that $M$ is admissible. On the other hand, if $M$ is any neighbourhood of a hemisphere in $\mathbb{S}^2$, then $M$ fails to be admissible, as every geodesic which passes through any $p\in M$ sufficiently near the boundary also passes through its antipodal point $p'$.

 However, we note that not every admissible manifold is either i) simple, or ii) a closed, bounded domain of $\Rn$. For example, given any finite interval $I$, the cylinder $M=I\times\mathbb{S}^1$ is admissible. Furthermore, so is any sufficiently small neighbourhood of the equator in $\mathbb{S}^2$ -- we leave it up to the reader to verify that if $M=\big\{(\theta,\varphi)\in\mathbb{S}^2:\ \varphi\in[-\varepsilon,\varepsilon]\big\}$, then $\varepsilon<\frac{\pi}{4}$ is sufficient for $M$ to be admissible.

Our second main result is as follows:

\begin{theorem}\label{th:main_result}
Let $(M,g)$ be an admissible manifold and $ \Omega$ be a neighbourhood of $\PD M$. Suppose $\beta_j \in C_c^{\infty}\left(0,T)\times M\setminus\Omega\right)$ and $V_j\in C_c^{\infty}\left(0,T)\times M\setminus\Omega\right)$, for $j=1,2$. Moreover assume that $\beta_j$ are non-zero everywhere in the support of $V_j$ for $j=1,2$. Then  \[  L_{\beta_1,V_1}= L_{\beta_2,V_2} \implies (\beta_1,V_1)=(\beta_2,V_2) \,\, \mbox{everywhere in $M$}.\]
\end{theorem}

The idea of proving our main results is to  use boundary sources that create special solutions of the linearised problem. In the Euclidean context, we utilize geometric optic solutions, whereas in the geometric scenario, we use Gaussian beam constructions for the linearized problem. These solutions can be combined to focus on a given point. By taking advantage of how these solutions interact with each other, we can recover the coefficients at that point. Importantly, our method does not require assuming that the geodesic ray transform is invertible on $(M,g)$, unlike the know results for the linear Schr\"odinger equation.


In fact, this method can be applied, with modifications, to a variety of non-linear Schr\"odinger equations. We consider, as an example, the Gross-Pitaevskii equation in the case where $M\subseteq\R^n$ is a convex Euclidean domain:
\begin{align}\label{IBVP_GP}
\begin{cases}
i \p_t u + \Delta u + V u + \beta |u|^2u = f & \text{on $(0,T) \times M$,}
\\
u|_{x \in \p M} = 0,
\\
u|_{t = 0} = 0.
\end{cases}
    \end{align}
If we again denote the source-to-solution map for the above problem by\[L_{\beta,V}f=u|_{(0,T)\times \Omega},\]where $f$ is supported in a given neighbourhood $\Omega$ of $\p M$ and $u$ solves \eqref{IBVP_GP}, then we have the following result:
\begin{theorem}\label{th:GP}
Let $n\ge 2$ and $M\subseteq\R^n$ be a convex Euclidean domain, and let $L_{\beta,V}$ be the source-to-solution map for the Gross-Pitaevskii equation. Then  \[L_{\beta_1,V_1}=L_{\beta_2,V_2} \implies  (\beta_1,V_1)=(\beta_2,V_2) \quad \mbox{everywhere in} \quad M.\]
\end{theorem}
One can formulate the result for Gross-Pitaevskii equation on certain Riemannian manifolds like non-linear Schr\"odinger equation. However, for simplicity, we confine our focus exclusively to the Euclidean setting.
\subsection{Earlier studies and related results} The non-linear Schr\"odinger equations arise in the study of Bose-Einstein condensates \cite{Bose_Einstein_book} and the propagation of light in nonlinear optical fibers \cite{Melomad}. They also appear in the study of gravity waves on water and the models in waves in plasma \cite{Melomad}.

Literature dealing with the linearized problem of recovering the time-dependent potentials of the dynamic Schr\"odinger equation is reasonably plentiful. It was initially shown by Eskin  \cite{Eskin_JMP} that the time-dependent electromagnetic potentials are uniquely determined by the Dirichlet-to-Neumann map. Logarithmic stability estimates for this recovery were established in \cites{Ben_aicha_JMP,CKS_SIAM}   and further stability estimates of H\"older-type were established by Kian and Soccorsi \cites{Kian_Soc_SIAM,Kian_Tetlow_IPI}. Let us also mention the work of Bellassoued and O. ben Fraj \cite{Bellassoued_Ben_IPI}, which establishes logarithmic and double-logarithmic stability estimates for the same problem with partial data. In the Riemannian setting, H\"older-stable recovery of the potentials from the Dirichlet-to-Neumann was first established for time-independent potentials \cites{Bellassoued_IP,Bellassoued_Choulli_JFA,Bellassoued_Ferreira_IP}, and then for time-dependent potentials by \cite{Kian_Tetlow_IPI}. Lastly, there is the work \cite{Tetlow_SIAM}, which uniquely recovers the time- dependent Hermitian coefficients appearing in the dynamic Schr\"odinger equation on a trivial vector bundle.  
 
The inverse problem studied here is a generalization of the inverse problem introduced by Calder\'on \cite{Calderon_Paper}, where the objective is to determine the electrical conductivity of a medium by making voltage and current measurements on its boundary. It is closely related to the problem of determining an unknown potential $q(x)$ in a fixed energy Schr\"dinger operator $\Delta  + q(x)$ from boundary measurements, first solved by Sylvester and Uhlmann \cite{SYL} in dimensions $n\ge 3$ and by Bukhgeim \cite{Bukhgeim_JIIP} in the $2$- dimensional space. For the inverse conductivity problem, the first global solution in two dimensions is due to Nachman \cite{Nachman_annals} for conductivities with two derivatives and by Astala and Pa\"iva\"rinta \cite{Astala_Paivarint_Annals_Math} the uniqueness of the inverse problem was proven general isotropic conductivities in $L^{\infty}$. We refer \cite{Uhl_survey}
 for more results in this direction.

In the context of anisotropic conductivity, where the conductivity  $\gamma=(\gamma_{ij})$ is a positive definite smooth matrix, recovering $\gamma$ from the Dirichlet-to-Neumann map poses the anisotropic Calder\'on problem. In dimensions $n\geq 3$, this problem is purely geometric in nature and it is equivalent to reconstructing a Riemannian metric $g$ from $ \Lambda_g$, where $g = |\det \gamma|^{1/(n-2)} \gamma^{-1}$ (see \cite{Uhl_30_years_Cal_prob}). Another variant involves recovering a conformal factor $\alpha$ from $\Lambda_{\alpha g}$, where $\alpha$ is a smooth positive function and $g$ is fixed, akin to retrieving a Riemannian metric within the same conformal class. The analogous reduction of recovering a conformal factor to retrieving a potential $q$ from the DN map $\Lambda_q$ associated with $-\Delta_g +q=0$ is discussed in \cite{Uhl_30_years_Cal_prob}.

In dimensions $n\geq 3$, the above mentioned problem remains open, with partial solutions available for smooth manifolds exhibiting certain product structures; see \cites{DOS, Dos_Jems}.  For more general smooth Riemannian manifolds, additional results are found in \cites{uhlmann2021anisotropic,ma_sahoo_salo_anisotropic_high_fre}. While past works often rely on the injectivity or stability of the geodesic ray transform for their proofs, this article adopts a novel approach. By strategically using nonlinearity as a tool, it avoids the need for such assumptions on the geodesic ray transform, similar ideas can be found in \cites{LLLS_JMPA, LLLS_Revista,FO_jde,Krupchyk_Uhlmann_magnetic} and the references therein. 

The inverse problems for nonlinear elliptic equations have also been widely studied. A standard method is to show that the first linearization of the nonlinear Dirichlet- to-Neumann map is actually the Dirichlet-to-Neumann map of a linear equation, and to use the theory of inverse problems for linear equations. For the semilinear stationary Schro\"dinger equation $\Delta u + a(x, u) = 0$, the problem of recovering the potential $a(x, u)$ was studied in  \cites{Isakov_Sylvester_CPAM,Sun_EJDE} in dimensions $n\ge 3$, and in \cites{Imanuvilov_Yamamoto_JIIP,Isakov_Nachman_TAMS,Sun_EJDE} when
$n = 2$. In addition, inverse problems have been studied for quasilinear elliptic equations \cites{Kang_Nakamura_IP,Sun_Uhlmann_AJM,Sun_Math_Z}. Certain Calder\'on type inverse problems for quasilinear equations on Riemannian manifolds were recently considered in \cite{LLS_Math_ann}. 

This paper uses extensively the non-linear interaction of solutions to solve inverse problems. In this approach, nonlinearity is used as a tool that helps in solving inverse problems and the reconstruction applies the higher order linearizations of the source-to-solution map. An Inverse problem for a non-linear scalar wave equation with a quadratic non-linearity was studied in \cite{KLU_Invent_math} using the multiple-fold linearization and non-linear interaction of non-smooth solutions of linearized equations. For the direct problem, the analysis of non-linear interaction for hyperbolic equations started in the studies of Bony \cite{Bony-secondmicro_POS}, Melrose and Ritter \cite{Melrose_Ritter_Annals_Math}, and Rauch and Reed \cite{Rauch_Reed_CPDE}, see also \cites{Barreto_IPI,Barreto_Wang_APDE}. These studies used microlocal analysis and conormal singularities, see \cites{GrU_CMP,GuU_duke,Melrose_Uhlmann_CPAM}.
The inverse problem for a semi-linear wave equation in $(1+3)$-dimensional Lorentzian space with quadratic non-linearities was studied in \cite{KLU_Invent_math} using interaction of four waves. This approach was extended for a general semi-linear term in \cites{Hintz_Uhlmann_Zhai_IMRN,LUW_CMP} and with quadratic derivative in \cite{Wang_Zhou_CPDE}. In \cite{KLOU_duke}, the coupled Einstein and scalar field equations were studied. The result has been more recently strengthened in \cite{Uhlmann_Wang_CPAM} for the Einstein scalar field equations with general sources. The inverse for semi-linear and quasi-linear wave equations in $(1 + n)$-dimensional space are studied in \cite{FLO_Forum_math} using the three wave interactions. In \cites{FO_jde,LLLS_JMPA,LLLS_Revista} similar multiple-fold linearization methods have been introduced to study inverse problems for elliptic non-linear equations, see also \cites{KrU_MRL,KrU_PMAS,SKSG_biharmonic_nonlinear}.

In recent works \cites{CLOP_JEMS,Chen_Lassas_Oksanen_Paternain_Yang_Mills,FO_JIMJ}, the authors have also studied problems of recovering zeroth and first order terms for semi-linear wave equations with Minkowski metric. The three wave interactions were used in \cites{CLOP_JEMS,Chen_Lassas_Oksanen_Paternain_Yang_Mills} to determine the lower order terms in the equations and in modelling non-linear elastic scattering from discontinuities \cites{deHoop_Uhlmann_Wang_IHP,deHoop_Uhlmann_Wang_Math_ann}.

For the linear wave equation, the determination of general time-dependent coefficients have been studied using the propagation of singularities. In the studies of recovery of sub-principal coefficients for the linear wave equation, we refer the reader to the recent works \cites{FIKO_spectral_theory,FIO_JGA,Stefanov_PAMS} for recovery of zeroth and first order coefficients and to \cite{Stefanov_Yang_APDE} for a reduction from the boundary data for the inverse problem associated to linear wave equation to the study of geometrical transforms of the domain. This latter approach has been recently extended to general real principal type differential operators \cite{real_principal_inv_prob}. Let us also mention here the recent works \cites{AAL_arxiv,AAL_Invent_math} which recover zeroth order coefficients of the wave equation on Lorentzian manifolds from the Dirichlet- to-Neumann map under suitable geometric assumptions.

The remainder of the article is structured as follows. In Section \ref{e_s2}, we establish the well-defined and smooth nature of the source-to-solution map  \eqref{eq_sos_map} in the vicinity of the zero solution. Moving on to Section \ref{e_s3}, we initially provide the proof of Theorem \ref{th:Eulidean} in the context of Euclidean geometry, aiming to enhance accessibility to the argument. In Section \ref{e_s4}, we demonstrate how this proof can be adapted for the Gross-Pitaevskii equation. Finally, we present the proof of Theorem \ref{th:main_result} in the geometric case in Section \ref{e_s5}.



\section{The Source-to-Solution Map}\label{e_s2}
The aim of this section is to establish that the source-to-solution map for the problem \eqref{main_IBVP}  is well defined  and smooth in a neighbourhood of zero in the Euclidean setting. In geometric setting similar argument works. 
More precisely we prove the following:
\begin{proposition}\label{e_prop}
Fix $\beta,V$ in the equation \eqref{main_IBVP}. Then for any $f\in\mathcal{H}$ there is a unique $u\in H^{2\kappa}_{00}$ such that $u=L_{\beta,V}f$, and the map $L_{\beta,V}:\mathcal{H}\rightarrow H^{2\kappa}_{00}$ is smooth.
\end{proposition}

In order to prove this result, we need some higher order energy estimates for the linearized problem. Thus, we begin by recalling the inhomogeneous linear Schr\"odinger equation
\begin{align}\label{e_LS}
\begin{cases}
(i\p_t+\Delta+V) u = f & \text{on $(0,T) \times M$,}
\\
u|_{x \in \p M} = 0,
\\
u|_{t = 0} = 0,
\end{cases}
    \end{align}
and let $\mathcal{S}$ denote the solution operator for the above equation, defined by $\mathcal{S}(f)=u$.
Let us also define the energy space\[H^{r,s}((0,T)\times M)=H^r(0,T;L^2(M))\cap L^2(0,T;H^s(M)),\] together with the associated norm $\norm{\cdot}_{H^{r,s}((0,T)\times M)}=\norm{\cdot}_{H^r(0,T;L^2(M))}+\norm{\cdot}_{L^2(0,T;H^s(M))}$.

We recall here the usual energy estimates for the linearized problem (\ref{e_LS}), which hold under the assumption that $f|_{t=0}=0$  (for details, see for example \cite{Kian_Soc_SIAM}). These estimates are:
\begin{align}\label{e_NRG1}
&\norm{u}_{L^\infty(0,T;L^2(M))}\leq C\norm{f}_{L^2((0,T)\times M)},
\\
\label{e_NRG2}
&\norm{\p_tu}_{L^\infty(0,T;L^2(M))}\leq C\norm{f}_{H^{1,0}((0,T)\times M)}
, \,\, \norm{\Delta u}_{L^2((0,T)\times M)}\leq C\norm{f}_{H^{1,0}((0,T)\times M)}.
\end{align}
We now establish the desired higher order energy estimates for the linearised problem (\ref{e_LS}).
\begin{lemma}\label{e_NRG_Higher}
The problem \eqref{e_LS} satisfies the estimate\begin{equation}\label{e_NRG4}\norm{u}_{H^{2\kappa}((0,T)\times M)}\leq C\norm{f}_{H^{2\kappa}((0,T)\times M)}\end{equation}for any choice of source term $f\in H^{2\kappa}_{00}$.
\end{lemma}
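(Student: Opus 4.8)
The plan is to differentiate \eqref{e_LS} repeatedly in time, apply the energy estimates \eqref{e_NRG1}--\eqref{e_NRG3} to each differentiated problem, and then convert the resulting control of temporal derivatives into control of spatial derivatives by solving the equation for $\Delta u$ and invoking elliptic regularity. Throughout write $N=2\kappa$, and recall that the full norm $\norm{u}_{H^{N}((0,T)\times M)}$ is equivalent to the sum of $\norm{\p_t^a\p_x^\alpha u}_{L^2((0,T)\times M)}$ over all $a+|\alpha|\le N$, so it suffices to bound each of these mixed derivatives by $C\norm{f}_{H^{N}((0,T)\times M)}$.

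First I would record the compatibility conditions at $t=0$. Since $u|_{t=0}=0$, $u|_{x\in\p M}=0$, and $V\in C^\infty_0((0,T)\times(M\setminus\Omega))$ vanishes together with all its $t$-derivatives near $t=0$, differentiating the identity $i\p_t u=f-\Delta u-Vu$ in time and evaluating at $t=0$ gives, by induction, $\p_t^m u|_{t=0}=0$ for all $m\le N$, using that $\p_t^m f|_{t=0}=0$ for $m\le N-1$ by definition of $H^{2\kappa}_0$. Next, for $0\le k\le N-1$ set $v=\p_t^k u$; applying $\p_t^k$ to \eqref{e_LS} and using the Leibniz rule shows that $v$ solves the same linear equation with homogeneous boundary and initial conditions and source
\[g_k=\p_t^k f-\sum_{j=1}^k\binom{k}{j}(\p_t^j V)(\p_t^{k-j}u).\]
The compatibility conditions and the vanishing of the $\p_t^j V$ near $t=0$ ensure $g_k|_{t=0}=0$, so the estimates \eqref{e_NRG1}--\eqref{e_NRG3} apply to $v$. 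Since the highest temporal derivative of $u$ entering $g_k$ and $\p_t g_k$ is of order $k$, one below the order $k+1$ that \eqref{e_NRG2} produces, an induction on $k$ bounds $\norm{\p_t^k u}_{L^\infty(0,T;L^2(M))}\le C\norm{f}_{H^{N}((0,T)\times M)}$ for every $k\le N$; in particular each pure temporal derivative $\p_t^k u$ with $k\le N$ is controlled in $L^2((0,T)\times M)$.

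It then remains to trade temporal regularity for spatial regularity. Solving \eqref{e_LS} for the Laplacian gives $\Delta\p_t^k u=\p_t^k f-i\,\p_t^{k+1}u-\p_t^k(Vu)$, and since $\p_t^k u|_{\p M}=0$ the homogeneous Dirichlet elliptic estimate yields $\norm{\p_t^k u}_{H^{2m}(M)}\le C\big(\norm{\Delta\p_t^k u}_{H^{2m-2}(M)}+\norm{\p_t^k u}_{L^2(M)}\big)$ at each fixed time. I would run a downward induction on the number of spatial derivatives: assuming all $\p_t^a\p_x^\alpha u$ with $a+|\alpha|\le N$ and $|\alpha|\le 2m-2$ are already bounded by $C\norm{f}_{H^{N}}$, the displayed identity expresses $\Delta\p_t^a u$ (for $a\le N-2m$) through $\p_t^a f$, the one-higher temporal but two-lower spatial derivative $\p_t^{a+1}u$, and $\p_t^a(Vu)$; each of these has spatial order $\le 2m-2$ and total order $\le N$, the $V$-term being handled by the Leibniz rule and the smoothness of $V$. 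Integrating the elliptic estimate in $t$ then raises the spatial order from $2m-2$ to $2m$ while keeping the total order $\le N$, with base case $m=0$ supplied by the temporal estimates above. Summing over all multi-indices with $a+|\alpha|\le N$ yields \eqref{e_NRG4}.

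The \emph{main obstacle} is bookkeeping rather than hard analysis: one must check at each stage that the sources $g_k$ vanish at $t=0$ so that the given energy estimates are applicable, and that neither induction — first increasing the temporal order via \eqref{e_NRG2}, then increasing the spatial order via elliptic regularity — ever calls for a derivative of $u$ of total order exceeding $N$ or for a boundary value of $\Delta\p_t^k u$ that is not available. Both are guaranteed by the structure of the source space $H^{2\kappa}_0$, which forces enough vanishing of $f$ at $t=0$, and by using at every elliptic step only the homogeneous Dirichlet condition $\p_t^k u|_{\p M}=0$, which holds for all $k$ because $u|_{\p M}=0$ for every $t$.
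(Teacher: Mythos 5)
Your argument is correct in substance, and its first half is exactly the paper's: both record the compatibility conditions $\p_t^m u|_{t=0}=0$ (the paper's \eqref{e_IV}) and then bootstrap pure time derivatives through the energy estimates \eqref{e_NRG1}--\eqref{e_NRG2}, absorbing the commutator terms $(\p_t^j V)\p_t^{k-j}u$ using the smoothness and compact support of $V$; this is the paper's estimate \eqref{e_b0}. Where you genuinely diverge is in converting temporal into spatial regularity. The paper runs an induction on $\kappa$ itself and estimates only the combinations $\p_t^\rho\Delta^\sigma u$ with $\rho+2\sigma=2K$: for $\sigma=1$ it solves the equation for $\Delta u$, and for $\sigma\ge2$ it substitutes the equation into the expression twice, so that every resulting derivative has total order $\le 2K-2$ and is controlled by the induction hypothesis; it then asserts, without justification, that bounding these particular combinations suffices for the full $H^{2K}$ norm. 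You instead invoke the elliptic estimate for the Dirichlet Laplacian, $\norm{\p_t^a u}_{H^{2m}(M)}\le C\big(\norm{\Delta\p_t^a u}_{H^{2m-2}(M)}+\norm{\p_t^a u}_{L^2(M)}\big)$, raising the spatial order two units at a time. This buys something real: it supplies precisely the reduction step the paper leaves implicit, and it does so using only the first Dirichlet condition $u|_{\p M}=0$, which matters because iterated conditions such as $\Delta u|_{\p M}=0$ are \emph{false} here (on the boundary $\Delta u=f-i\p_t u-Vu$ reduces to $f|_{\p M}$, and $f\in H^{2\kappa}_0$ need not vanish on $\p M$). The cost is a small loose end in your even-increment bookkeeping: the mixed derivatives $\p_t^a\p_x^\alpha u$ with $|\alpha|$ odd and $a+|\alpha|=2\kappa$ are not reached by your induction. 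They need one extra line, e.g. $\norm{\nabla\p_t^{2\kappa-1}u}_{L^2(M)}^2\le\norm{\p_t^{2\kappa-1}u}_{L^2(M)}\norm{\Delta\p_t^{2\kappa-1}u}_{L^2(M)}$ by integration by parts (legitimate since $\p_t^{2\kappa-1}u|_{\p M}=0$, with $\Delta\p_t^{2\kappa-1}u$ rewritten via the equation in terms of quantities of order $\le 2\kappa$), and the analogous elliptic step with odd source regularity for higher odd $|\alpha|$. The paper's proof has the same loose end in its unproved reduction claim, so this is a shared bookkeeping matter rather than a defect specific to your approach.
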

\begin{proof}
We begin by noting that $i\p_tu=f-\Delta u - Vu$. Then the assumption that $\p_t^mf|_{t=0}=0$ for $m \leq 2\kappa-1$, together with the fact that $u|_{t=0}=0$, immediately implies that\begin{equation}\label{e_IV}\p_t^mu|_{t=0}=0,\ {\rm when}\ m\leq 2\kappa.\end{equation}

The proof of the estimate (\ref{e_NRG4}) is by induction. The case $\kappa=0$ is implied by (\ref{e_NRG1}). Therefore, suppose that we have shown the estimate (\ref{e_NRG4}) holds for $\kappa\leq K-1$. Then it suffices to show that, for $\rho,\sigma\in\N$ such that $\rho+2\sigma=2K$, we have\begin{equation}\label{aim}\norm{{\p_t}^\rho\Delta^\sigma u}_{L^2((0,T)\times M)}\leq\norm{f}_{H^{2K}((0,T)\times M)}.\end{equation}
We begin by applying $\p_t$ to (\ref{e_LS}), and observe that\begin{equation}\label{d_t}(i\p_t+\Delta+V)\p_tu=\p_tf-(\p_tV)u.\end{equation}Then, since $\p_tu$ satisfies the zero initial condition $\p_tu|_{t=0}=0$, we can apply (\ref{e_NRG2}) to equation (\ref{d_t}) to observe that\[\norm{\p_t^2u}_{L^\infty(0,T;L^2(M))}\leq C\norm{f}_{H^{2,0}((0,T)\times M)}+C\norm{(\p_tV)u}_{H^{1,0}((0,T)\times M)},\]and using the fact that $V\in C^\infty_0((0,T)\times M\setminus\Omega)$, together with the estimates \eqref{e_NRG1} and \eqref{e_NRG2}, we conclude that\[\norm{\p_t^2u}_{L^\infty(0,T;L^2(M))}\leq C\norm{f}_{H^{2,0}((0,T)\times M)}.\]
We can then proceed to apply this estimate to \eqref{d_t} and use the previously derived estimates to obtain the bound\[\norm{\p_t^3u}_{L^\infty(0,T;L^2(M))}\leq C\norm{f}_{H^{3,0}((0,T)\times M)},\]and bootstrapping in this manner we can conclude that\begin{equation}\label{e_b0}\norm{\p_t^mu}_{L^\infty(0,T;L^2(M))}\leq C\norm{f}_{H^{m}((0,T)\times M)}.\end{equation}
In particular, we note that estimate \eqref{e_b0} holds for all $m\leq 2K$, not just when $m$ is even, and this estimate establishes \eqref{aim} in the case where $\sigma=0$. Let us now consider the case where $\sigma=1$. Since $u$ satisfies the Schr\"odinger equation, it follows that\begin{equation}\begin{split}\label{e_b1}&\norm{\p_t^{2K-2}\Delta u}_{L^2((0,T)\times M)}\leq C\norm{\p_t^{2K-2}(f-i\p_t u+Vu)}_{L^2((0,T)\times M)}\\\leq& C\norm{f}_{H^{2K-2}((0,T)\times M)}+C\norm{\p_t^{2K-1}u}_{L^2((0,T)\times M)}+C\norm{\p_t^{2K-2}(Vu)}_{L^2((0,T)\times M)}.\end{split}\end{equation}

From \eqref{e_b0}, we deduce that the second term on the right-hand side of \eqref{e_b1} is bounded by $\norm{f}_{H^{2K-1}((0,T)\times M)}$. Further, since $V\in C^\infty_0((0,T)\times M\setminus\Omega)$, the induction hypothesis implies that the third term is bounded by $\norm{f}_{H^{2K-2}((0,T)\times M)}$. Therefore, it follows that \[\norm{\p_t^{K-2}\Delta u}_{L^2((0,T)\times M)}\leq C\norm{f}_{H^{2K-1}((0,T)\times M)},\] which establishes (\ref{aim}) in the case where $\sigma=1$. It remains only to deal with the case where $\sigma\geq2$. In this case, note that\[\begin{split}{\p_t}^\rho\Delta^\sigma u&=\, {\p_t}^\rho \Delta^{\sigma-1}(f-i\p_tu-Vu)\\&=\, {\p_t}^\rho\Delta^{\sigma-1}(f-Vu)-i{\p_t}^{\rho+1}\Delta^{\sigma-2}\Delta u\\&=\, {\p_t}^\rho\Delta^{\sigma-1}(f-Vu)-i{\p_t}^{\rho+1}\Delta^{\sigma-2}(f-i\p_tu-Vu).\end{split}\]
 Since the derivatives of $u$ and $f$ in the last expression are all of order $2K-2$ or lower, the induction hypothesis then implies that\[\norm{{\p_t}^\rho\Delta^\sigma u}_{L^2((0,T)\times M)}\leq C\norm{f}_{H^{2K-2}((0,T)\times M)},\]and this finishes the proof of (\ref{aim}) for $\sigma\geq2$.
\end{proof}
In light of the above, we can now proceed to the proof of proposition \ref{e_prop}.
\begin{proof}[Proof of Proposition \ref{e_prop}]
Let us first address the issue of uniqueness. Suppose that for some $f$, we have two solutions of \eqref{main_IBVP}, which we denote by $u$ and $v$. Then it follows that $(i\p_t+\Delta+V)(u-v)+\beta(u+v)(u-v)=0$, and applying the energy estimate \eqref{e_NRG1} for the linear problem, we conclude that $u-v=0$. This implies $L_{\beta,V}$ is well-defined.

We now fix some $\kappa\in\N$ large enough that $H^{2\kappa}$ is a Banach algebra. It follows from the trace theorem that $H^{2\kappa}_{00}$ is likewise a Banach algebra, and we can therefore define the map\[\mathcal{K}:H^{2\kappa}_{00}\times H^{2\kappa}_{00}\rightarrow H^{2\kappa}_{00}\quad \mbox{via the expression} \quad \mathcal{K}(u,f)=f-\beta u^2.\]
We now consider the map $\Phi(u,f)=u-\mathcal{S}\mathcal{K}(u,f)$, and observe that $\Phi(u,f)=0$ implies that $u$ is a solution of the non-linear Schr\"odinger equation \eqref{main_IBVP}. Observe also that
\[\mathcal{S}:H^{2\kappa}_{00}\rightarrow H^{2\kappa}_{00}\]
by the result of Lemma \eqref{e_NRG_Higher} together with \eqref{e_IV}. Therefore it follows that
\[\Phi(u,f):H^{2\kappa}_{00}\times H^{2\kappa}_{00}\rightarrow H^{2\kappa}_{00}.\]

We note that the map $\Phi$ is smooth in $u$ and $f$, since $\mathcal{K}(u,f)$ is a polynomial and $\mathcal{S}$ is linear. We can use the chain rule to compute $\p_u\Phi(0,0)=\mathrm{Id}$, and the implicit function theorem gives a smooth map $f\mapsto u$ from a neighbourhood $\mathcal{H}$ of the zero function in $H^{2\kappa}_{00}$ to $H^{2\kappa}_{00}$, such that we have $\Phi(u(f), f)=0$ for all $f\in\mathcal{H}$. This map must coincide with $L_{\beta,V}$ in $\mathcal{H}$, by the uniqueness already established.
\end{proof}

\section{The Euclidean Case}\label{e_s3}
We fix $T>0$ and consider the case where $M$ is a convex Euclidean domain of $\R^n$. We shall write $(t,x)$ for the usual Cartesian coordinates of $(0,T)\times M$.  Let us first recall the non-linear Schr\"odinger equation \eqref{main_IBVP}:
\begin{align}\label{NLS}
\begin{cases}
i \p_t u + \Delta u + V u + \beta u^2 = f & \text{on $(0,T) \times M$,}
\\
u|_{x \in \p M} = 0,
\\
u|_{t = 0} = 0.
\end{cases}
    \end{align}
    
Before entering into the technical details of the proof of Theorem \ref{th:Eulidean}, we will first give a brief explanation of how the non-linearity is used. Let $f_1,f_2\in H^{2k}_0$ be supported in $(0,T)\times\Omega$, and consider the two-parameter family of source terms
\begin{equation}f_\varepsilon:=\varepsilon_1f_1+\varepsilon_2f_2,\quad\forall(\varepsilon_1,\varepsilon_2)\in\R^2.
\end{equation} 
For small enough $\varepsilon_1,\varepsilon_2$ it follows that $f_\varepsilon\in\mathcal{H}$, and we let $w_\varepsilon$ denote the unique solution of \eqref{NLS} with this choice of source term. Then\[w:=\p_{\varepsilon_1}\p_{\varepsilon_2}w_\varepsilon|_{\varepsilon=0},\]
solves the linear Schr\"odinger equation \eqref{e_LS} with $f=-2\beta\mathcal{U}_1\mathcal{U}_2, \mbox{where }\, \mathcal{U}_j=\p_{\varepsilon_j}w_\varepsilon|_{\varepsilon=0}$, and $\mathcal{U}_j$ satisfies the \eqref{e_LS} with $f=f_j$. In the proof of Theorem \ref{th:Eulidean}, we choose $f_j$ which generate geometric optics solutions $\mathcal{U}_j$ supported near lines which intersect at some $p\in M$. As a result of this, we hope to recover information about the coefficients $(\beta,V)$ at the point $p\in M$.

The remainder of this section is divided as follows. We briefly recall the construction of approximate geometric optics solutions for the linear Schr\"odinger equation in section \ref{e_s3.1}. In section \ref{e_s3.2}, we use the convexity of $M$ to show that the source-to-solution map determines the amplitudes of these solutions in $\Omega$, and hence also the sources $f_j$ up to a small error. Finally, in section \ref{e_s3.3}, we show how the solutions generated by these sources can be used to recover the coefficients.


\subsection{Geometric Optics Solutions}\label{e_s3.1}
In this section, we recall the construction of  geometric optics solutions to the linear Schr\"odinger equation. The details are largely the same as those elsewhere in the literature (e.g. \cite{Kian_Soc_SIAM}), but we give them below for the reader's convenience. Let us begin by considering the homogeneous linear Schr\"odinger equation
\begin{equation}\begin{split}\label{e_HLS}i \p_t u + \Delta u + V u &= 0 \quad\:\mbox{in}\quad (0,T)\times M\\
u|_{t=0}& = 0\quad \mbox{ in }\quad M.\end{split}\end{equation}
The construction is based on the use of the ansatz\[U(t,x)=e^{i\tau(\xi\cdot x-c\tau t)}a(\tau;t,x)=e^{i\tau(\xi\cdot x-c\tau t)}\Bigg(\sum_{k=0}^N\frac{a_k(t,x)}{\tau^k}\Bigg),\]
where $\tau>0$ is a large parameter, $c>0$ is some constant, $\xi\in\R^{n}$, and the amplitudes  $a_k$ are   to be determined. We insert $U$  to the Schr\"odinger operator to deduce\begin{equation}\label{Su}\begin{split}(i\p_t+\Delta+V)U=e^{i\tau(\xi\cdot x-c\tau t)}(i\p_t+\Delta+V)a+\tau^2e^{i\tau(\xi\cdot x-c\tau t)}(c-|\xi|^2)a\\+2i\tau e^{i\tau(\xi\cdot x-c\tau t)}(\mathcal{T}_\xi a),\end{split}\end{equation}where $\mathcal{T}_\xi=\sum_{l=1}^n\xi^l\p_{x^l}$ is the transport operator in the $\xi$ direction. We require the right-hand side of the above to vanish in powers of $\tau$. In particular, this imposes the condition that\begin{equation}\label{eikonal}|\xi|^2=c,\end{equation}and that the amplitude functions $a_k$ satisfy the transport equations
\begin{equation}\label{transports}\mathcal{T}_\xi a_0=0, \quad 
\mathcal{T}_\xi a_N=\frac{i}{2}(i\p_t+\Delta+V)a_{N-1}\quad \mbox{for $N=1,2,\cdots$}.
\end{equation}
Let us now fix some $y\in M$, and denote by $\gamma_{y,\xi}$ the line through the point $y$ with direction $\xi$, as parameterized by $\gamma_{y,\xi}(s)=s\xi+y$. We can choose vectors $\omega_l\in\R^n$ such that $\{ \frac{\xi}{|\xi|},\omega_1,\cdots,\omega_{n-1}\}$ 
forms an orthonormal basis of $\R^n$ with respect to the Euclidean metric. Then, for some small $\delta>0$, we choose the zeroth amplitude to be\begin{equation}\label{amp0}a_0(t,x)=\phi(t)\prod_{l=1}^{n-1}\chi_\delta(\omega_l\cdot(x-y)),\end{equation}where $\chi_\delta\in C^\infty_0(-\delta,\delta)$, and $\phi\in C^\infty_0((0,T))$ is a smooth cutoff. Therefore it follows that for all $t\in(0,T)$ the amplitude $a_0(t,\cdot)$ is supported in a $\delta$-neighbourhood of the line $\gamma_{y,\xi}(\R)$.

We can then use the transport equations \eqref{transports}, with vanishing initial conditions imposed upon the subset $\Sigma_{y,\xi}=\{x\in \R^n: \xi\cdot(x-y)=0\}$ to compute the remaining amplitudes $a_k$ for $k\geq1$ iteratively. Thus we have that\begin{equation}\label{ampsk}a_k(s\xi+y)=\frac{i}{2}\int_0^s\big[(i\p_t+\Delta+V)a_{k-1}\big](\tilde{s}\xi+y)d\tilde{s}.\end{equation}
It follows from the above that $U(t,\cdot)$ is compactly supported in a $\delta$-neighbourhood of $\gamma_{y,\xi}(\R)$ for all $t\in(0,T)$. Then, by using \eqref{eikonal} and \eqref{transports} in the expression \eqref{Su}, we can deduce that\[(i\p_t+\Delta+V)U=\tau^{-N}e^{i\tau(\xi\cdot x-c\tau t)}(i\p_t+\Delta+V)a_N,\]and then a direct computation shows that\[\norm{(i\p_t+\Delta+V)U}_{H^\sigma((0,T)\times M)}\lesssim\tau^{-N+2\sigma}.\]
We now convert the approximate solution $U(t,x)$  into a exact solution of   \eqref{e_HLS} by writing $ u=U+R_\tau,$ 
where the remainder term $R_\tau$ solves
\begin{align}\begin{cases}\label{remainder}
&(i\p_t+\Delta+V)R_\tau=-(i\p_t+\Delta+V)U\quad \text{in $(0,T) \times M$,}
\\
&R_\tau|_{x \in \p M} = 0,
\\
&R_\tau|_{t = 0} = 0.
    \end{cases}\end{align}
Then the energy estimate \eqref{e_NRG4} immediately implies that
\begin{equation}\label{e_NRG_rem}
\nrm[H^\sigma((0,T)\times M)]{R_{\tau}}=\norm{u-U}_{H^\sigma((0,T)\times M)}\lesssim\tau^{-N+2\sigma},
\end{equation}
for even $\sigma\in\N$, and we have verified that the ansatz $U(t,x)$ is indeed an approximate solution of the linear Schr\"odinger equation.


\subsection{Determination of the Boundary Sources}\label{e_s3.2}
We now demonstrate that the amplitudes of geometric optics solutions are determined in $(0,T)\times\Omega$ by the source-to-solution map $L_{\beta,V}$. We begin by defining the map $\mathcal{L}_V$ as \[\mathcal{L}_Vf=u|_{(0,T)\times\Omega},\]where $f$ is supported in $(0,T)\times\Omega$, and $u$ solves the linear Schr\"odinger equation
\begin{align}
\begin{cases}
(i\p_t+\Delta+V) u = f & \text{on $(0,T) \times M$,}
\\
u|_{x \in \p M} = 0,
\\
u|_{t = 0} = 0.
\end{cases}
    \end{align}
Note that $L_{\beta,V}$ determines $\mathcal{L}_V$ for any $\beta,V$ via the expression $\mathcal{L}_V  f=\p_\e L_{\beta,V}(\e f)|_{\e=0}.$ This shows that 
\begin{align}
    L_{\beta_1,V_1}= L_{\beta_2,V_2}\implies \mathcal{L}_{V_1}=\mathcal{L}_{V_2}.
\end{align}
We next consider the Sch\"odinger equation which is backward in time.
\begin{align}
\begin{cases}
(i\p_t+\Delta+V) w = h & \text{on $(0,T) \times M$,}
\\
w|_{x \in \p M} = 0,
\\
w|_{t = T} = 0.
\end{cases}
    \end{align}
Then, it holds that $\mathcal{L}_V^\ast h=w|_{(0,T)\times\Omega}$ with $h$ supported in $(0,T)\times\Omega$. In fact, we can compute that for source terms $f,h$ supported in $(0,T)\times\Omega$ there holds
\begin{equation}\label{adjoint}\pair{\mathcal{L}_{V}f,h}_{L^2((0,T)\times\Omega)}=\pair{f,w}_{L^2((0,T)\times\Omega)}.
\end{equation}

For $j=1,2$, let us consider the potentials $V_j\in C^\infty_0((0,T)\times M\setminus\Omega)$. Given any line $\gamma_{q,\xi}$ with initial point $q\in\p M$ and initial direction $\xi\in\R^n$, we can define a sequence of functions $a_k^{(j)}$ corresponding to $V_j$ as follows. First, let us choose vectors $\omega_1,\cdots,\omega_{n-1}\in\R^n$ such that $\{\frac{\xi}{|\xi|},\omega_1,\cdots,\omega_{n-1}\}$ 
is an orthonormal basis of $\R^n$ with respect to the Euclidean metric. Then, for some small $\delta>0$, we once again choose the zeroth amplitude
\begin{align}\label{eq_a0}
  a_0^{(j)}(t,x)=\phi(t)\prod_{l=1}^{n-1}\chi_\delta(\omega_l\cdot(x-q)),  
\end{align}
where $\chi_\delta\in C^\infty_0(-\delta,\delta)$, and $\phi\in C^\infty_0(0,T)$ is a smooth cutoff. We can define the subsequent functions by solving the following transport equations\begin{equation}\label{sequence}2i\mathcal{T}_\xi a_{k+1}^{(j)}+(i\p_t+\Delta+V_j)a_{k}^{(j)}=0,\quad a_{k+1}^{(j)}|_{\Sigma_{q,\xi}}=0.\end{equation}

We can define, for each $\tau>0$ and $N\in\N$, an approximate geometric optics solution $U_j$ corresponding to the choice of $a_0^{(j)}$ and $\gamma_{q,\xi}$ through the expression\[U_j(t,x)=e^{i(\tau\xi\cdot x-|\xi|^2\tau^2t)}\sum_{k=0}^N\tau^{-k}a_k^{(j)}(t,x).\]We denote by $u_j$ the corresponding exact solution of the Schr\"odinger equation
\begin{align}
\begin{cases}
(i\p_t+\Delta+V_j) u_j = 0 & \text{on $(0,T) \times M$,}
\\
u_j|_{t = 0} = 0,
\end{cases}
\end{align}
as constructed in the  subsection \ref{e_s3.1}. Note that $U_j$ coincides with $u_j$ up to a small error $O(\tau^{-N})$ in $L^2$. Let $\eta\in C^\infty_0(M)$ satisfy $\eta=1$ in $M\setminus\Omega$, and choose $f_j=(i\p_t+\Delta+V_j)(\eta u_j)$. Then, we note that the function $\eta u_j$ solves the Schr\"odinger equation
\begin{align}
\begin{cases}
(i\p_t+\Delta+V_j) \mathcal{U}= f_j & \text{on $(0,T) \times M$,}
\\
\mathcal{U}|_{x \in \p M} = 0,
\\
\mathcal{U}|_{t = 0} = 0,
\end{cases}
\end{align}
with the source $f_j$ supported in $(0,T)\times\Omega$. In a similar manner, we can consider the geometric optics solution of the backwards-in-time problem, 
\begin{align}
\begin{cases}
(i\p_t+\Delta+V_2) w = 0 & \text{on $(0,T) \times M$,}
\\
w|_{t = T} = 0,
\end{cases}
\end{align}
and observe that $w$ can be similarly approximated to $O(\tau^{-N})$ by the expression \[e^{i(\tau\xi\cdot x-|\xi|^2\tau^2t)}\sum_{k=0}^N\tau^{-k}w_k(t,x),\]where $w_k$ is a sequence which satisfies \eqref{sequence} with $V=V_2$ for  $k\ge 0$ and $w_0$ is given by \eqref{eq_a0}. Further, letting $\tilde{\eta}\in C^\infty_0(M)$ also satisfy $\eta=1$ in $M\setminus\Omega$, we observe that, for $h=(i\p_t+\Delta+V_2)(\tilde{\eta}w)$, the function $\tilde{\eta}w$ solves
\begin{align}
\begin{cases}
(i\p_t+\Delta+V_2) \mathcal{W} = h & \text{on $(0,T) \times M$,}
\\
\mathcal{W}|_{x \in \p M} = 0,
\\
\mathcal{W}|_{t = T} = 0.
\end{cases}
\end{align}

\begin{lemma}\label{e_lemma}
Suppose that $\mathcal{L}_{V_1}=\mathcal{L}_{V_2}$. Then $a_k^{(1)}=a_k^{(2)}$ in $(0,T)\times\Omega$ for all $k\in\N$. 
\end{lemma}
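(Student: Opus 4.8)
The plan is to reduce the statement to the single re-entry of the line $\gamma_{q,\xi}$ into $\Omega$, and to exploit that the portion of the geometric optics source lying in the \emph{first} passage through $\Omega$ never sees the potentials. Parametrise the line by arclength, $\gamma_{q,\xi}(s)=s\xi+q$, and recall that the $a_k^{(j)}$ are obtained by integrating the transport equations (\ref{sequence}) in the $+\xi$ direction from $\Sigma_{q,\xi}$. On the first segment of the line inside $\Omega$ (before it meets $M\setminus\Omega$) one has $V_1=V_2=0$ and the same initial data on $\Sigma_{q,\xi}$, so $a_k^{(1)}=a_k^{(2)}$ there for every $k$; this ``entry'' segment carries no information. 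Convexity of $M$ guarantees that the line meets $M$ in a single segment, hence re-enters $\Omega$ exactly once, in an ``exit'' interval $s\in(s_a,s_{\max})$ on which again $V_1=V_2=0$. The whole content of the lemma is the equality of the amplitudes on this exit interval.

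Next I would manufacture a \emph{single} source, supported in the entry part of $\Omega$, that can be fed into both problems. Since the geometric optics parts $U_1,U_2$ coincide on the entry segment, the localisation $f^e:=\chi_e\big((\Delta\eta)U_1+2\nabla\eta\cdot\nabla U_1\big)$ of the source $f_j=(i\p_t+\Delta+V_j)(\eta u_j)=(\Delta\eta)u_j+2\nabla\eta\cdot\nabla u_j$ to the entry window is independent of $j$; here $\chi_e$ is a cutoff to the entry window. Let $p_j:=\mathcal{S}_{V_j}f^e$ be the forward solution of the $V_j$–problem (\ref{e_LS}) with this common source. Because $f^e$ is supported in $(0,T)\times\Omega$ and is the same for both $j$, the hypothesis $\mathcal{L}_{V_1}=\mathcal{L}_{V_2}$ gives at once $p_1=p_2$ in $(0,T)\times\Omega$.

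It then remains to identify $p_j$ with the geometric optics solution on the relevant part of the exit window. The function $\eta u_j$ solves the $V_j$–equation with source $f_j=f^e+f_j^{\mathrm{ex}}+O(\tau^{-N})$, where $f_j^{\mathrm{ex}}$ is supported in the exit shell $\{s\in(s_3,s_4)\}$ on which $\nabla\eta\neq0$ near $\p M$; hence $p_j=\eta u_j-\mathcal{S}_{V_j}f_j^{\mathrm{ex}}+O(\tau^{-N})$. Since the transport equations integrate forward in $s$, the geometric optics wave produced by $f_j^{\mathrm{ex}}$ has vanishing amplitudes for $s<s_3$, so on the open part of the exit window with $s\in(s_a,s_3)$, where $\eta\equiv1$, one has $p_j=e^{i(\tau\xi\cdot x-|\xi|^2\tau^2 t)}\sum_k\tau^{-k}a_k^{(j)}+O(\tau^{-N})$. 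Matching $p_1=p_2$ there order by order in $\tau^{-1}$ (taking $N$ larger than the order considered) yields $a_k^{(1)}=a_k^{(2)}$ on $\{s\in(s_a,s_3)\}$ for every $k$. Finally, on the whole exit interval $V_1=V_2=0$, so both families satisfy the same first order transport ODE along $\gamma_{q,\xi}$ with forcing determined by the lower order amplitudes; an induction on $k$ together with ODE uniqueness propagates the equality from $(s_a,s_3)$ to all of $(s_a,s_{\max})$, and combined with the entry segment this gives $a_k^{(1)}=a_k^{(2)}$ in $(0,T)\times\Omega$.

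I expect the main obstacle to be the forward-propagation step: the Schr\"odinger flow has infinite speed of propagation, so the claim that $\mathcal{S}_{V_j}f_j^{\mathrm{ex}}$ does not contaminate the region $\{s<s_3\}$ must be made precise at the level of the geometric optics amplitudes, with the genuine backward tails absorbed into the $O(\tau^{-N})$ remainder and controlled by the higher order energy estimate (\ref{e_NRG4}). Some care is also needed to make $f^e$ genuinely independent of $j$ (working with $U_j$ rather than the exact $u_j$, so that the $j$–dependent remainders $R_\tau$ enter only through errors of size $O(\tau^{-N})$), and to ensure that the interval $\{s\in(s_a,s_3)\}$ lies strictly between $M\setminus\Omega$ and the exit shell, which is precisely where the convexity of $M$ is used.
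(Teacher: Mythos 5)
Your reduction to the exit segment, and the device of a common entry source $f^e$ so that $\mathcal{L}_{V_1}=\mathcal{L}_{V_2}$ gives $p_1=p_2$ on $(0,T)\times\Omega$ exactly, are fine, and this is a genuinely different route from the paper's. But the step you flagged as the ``main obstacle'' is where the argument actually breaks, and for a reason different from the one you name: the obstruction is not dispersive backward tails but \emph{boundary reflection}. The function $\mathcal{S}_{V_j}f_j^{\mathrm{ex}}$ solves the mixed problem with homogeneous Dirichlet data. The wave launched by $f_j^{\mathrm{ex}}$ has group velocity $2\tau\xi$, reaches $\p M$ in time $O(\tau^{-1})$, and the Dirichlet condition forces a reflected wave of amplitude $O(1)$ --- not $O(\tau^{-N})$ --- which re-enters $M$, crosses $M\setminus\Omega$ (where it interacts with $V_j$ again), and keeps bouncing: over the fixed time support of $\phi$ it undergoes on the order of $\tau$ billiard reflections in the convex domain, and the successive reflected tubes generically sweep across the neighbourhood of the segment $s\in(s_a,s_3)$. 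Hence your identity $p_j=e^{i\tau(\xi\cdot x-|\xi|^2\tau t)}\sum_k\tau^{-k}a_k^{(j)}+O(\tau^{-N})$ on that segment is false, and the order-by-order matching collapses. Two further points: (i) the energy estimate (\ref{e_NRG4}) cannot rescue this, because it is global in space; applied to $f_j^{\mathrm{ex}}$, whose leading term $2i\tau(\mathcal{T}_\xi\eta)a_0^{(j)}$ has size $O(\tau)$, it yields only $\mathcal{S}_{V_j}f_j^{\mathrm{ex}}=O(\tau)$ everywhere, with no localisation; (ii) the contamination is $j$-dependent, since the reflected waves re-cross $\supp(V_1-V_2)$ and also carry the exit-shell amplitudes $a_k^{(j)}$, so it cannot be discarded even after adding an induction on $k$. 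Making your step precise would require a microlocal propagation argument for the mixed Schr\"odinger problem (showing the reflected components have frequency support away from the direction $\xi$ and removing them by non-stationary phase), a far heavier tool than anything used in the paper.

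The paper's proof is built precisely to avoid ever needing local, pointwise knowledge of an exact solution. It argues by induction on $k$ through the duality identity (\ref{adjoint})--(\ref{not_Alessandrini}): the forward GO solutions for $V_1,V_2$ are paired against an adjoint, backward-in-time GO solution for $V_2$ with the \emph{same} phase, so every term becomes an integral over $(0,T)\times\Omega$ of products of GO amplitudes, computable up to a global $O(\tau^{-N})$ error by the energy estimates; reflections never appear because the exact solutions enter only inside these pairings. The localisation you try to achieve by ``looking at'' the exit window is achieved there by the choice of cutoffs: $\eta=1$ on $\supp\tilde\eta$, $\tilde\eta\to\mathbf{1}_Q$ with $\p Q$ matching the hyperplane $\Sigma_{x,\xi}$ at the exit point (Figure \ref{Fig1}), and $w_0$ concentrating on the line, which turns (\ref{final}) into the equality $a_K^{(1)}=a_K^{(2)}$ at the exit point, after which the transport equations propagate it through $\Omega$. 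If you repair your argument by testing $p_1-p_2$ against an adjoint oscillatory solution instead of matching pointwise, you will find yourself reconstructing essentially this proof.
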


\begin{proof}
 The result is trivial  for $k=0$. We next argue by induction and  assume that the result holds for all $k\leq K-1\ll N$. We note that $a_K^{(1)}=a_K^{(2)}$ along $\gamma_{q,\xi}$ until the line leaves $\Omega$, since $V_1=V_2=0$ in this region. For the inductive step, let us first observe that $[(i\p_t+\Delta+V_j),\eta]=2\nabla\eta\cdot\nabla+\Delta\eta$. This further entails  \begin{equation}(i\p_t+\Delta+V_2)\label{source_w}\Big(\tilde{\eta}e^{i(\tau\xi\cdot x-|\xi|^2\tau^2t)}\sum_{k=0}^N\tau^{-k}w_k\Big)=e^{i(\tau\xi\cdot x-|\xi|^2\tau^2t)}\big(2i\tau(\mathcal{T}_\xi\tilde{\eta})w_0+O(1),\big)\end{equation}and further, modulo a small error of $O(\tau^{-N})$, that
 \begin{equation}
 \begin{split}\label{source_a}&(i\p_t+\Delta+V_j)\Big(\eta e^{i(\tau\xi\cdot x-|\xi|^2\tau^2t)}\sum_{k=0}^N\tau^{-k}a_k^{(j)}\Big)\\&\quad=e^{i(\tau\xi\cdot x-|\xi|^2\tau^2t)}\big(2i\tau(\mathcal{T}_\xi\eta)+2\nabla\eta\cdot\nabla+\Delta\eta\big)\sum_{k=0}^N\tau^{-k}a_k^{(j)}.
 \end{split}
 \end{equation}
Then, using the fact that $\mathcal{L}_{V_1}=\mathcal{L}_{V_2}$, the identity \eqref{adjoint} implies that
\begin{equation}\label{not_Alessandrini}0=\pair{(\mathcal{L}_{V_j}-\mathcal{L}_{V_2})f_j,h}_{L^2((0,T)\times\Omega)}=\pair{\eta u_j,h}_{L^2((0,T)\times\Omega)}-\pair{f_j,\tilde{\eta} w}_{L^2((0,T)\times\Omega)}.
\end{equation}
Taking the difference of the expression \eqref{not_Alessandrini} for $j=1$ and $j=2$, we deduce that
\begin{align}
&0=\big\langle\eta\tau^{-K}(a_K^{(1)}-a_K^{(2)})+O(\tau^{-K-1}),2i\tau(\mathcal{T}_\xi\tilde{\eta})w_0+O(1)\big\rangle\\&-\big\langle(2i\tau(\mathcal{T}_\xi\eta)+2\nabla\eta\cdot\nabla+\Delta\eta)\big(\tau^{-K}(a_K^{(1)}-a_K^{(2)})+O(\tau^{-K-1})\big),\tilde{\eta}w_0+O(\tau^{-1})\big\rangle.
\end{align}
By considering the leading order term, we can deduce that\[0=\big\langle(\eta\mathcal{T}_\xi\tilde{\eta}+\tilde{\eta}\mathcal{T}_\xi\eta)(a_K^{(1)}-a_K^{(2)}),w_0\big\rangle_{L^2((0,T)\times\Omega)},\]and if we choose $\eta$ so that $\eta=1$ in the support of $\tilde{\eta}$, we then have
\begin{equation}\label{final}\pair{\mathcal{T}_\xi\tilde{\eta}(a_K^{(1)}-a_K^{(2)}),w_0}_{L^2((0,T)\times\Omega)}=0.\end{equation}

We then fix a set $Q$ such that $\overline{M\setminus\Omega}\subseteq Q\subseteq M^{int}$, with smooth boundary $\p Q$ such that the line $\gamma_{q,\xi}$ intersects $\p Q$ exactly twice, and such that, near the point $x$ where $\gamma_{q,\xi}$ exits $Q$, the hyperplane $\Sigma_{x,\xi}$ coincides with $\p Q$. It follows from the convexity of $M$ that we can always construct a set with these properties as demonstrated in the figure below.

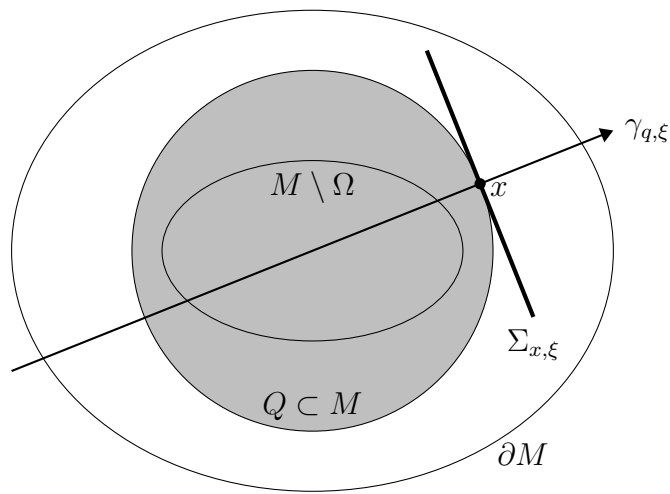
\begin{figure}[H]
\begin{center}
\begin{tikzpicture}[scale=0.8]
\draw [fill=gray!50](0,0) circle(3) node[above,yshift=-2.4cm]{$Q\subset M$};
\draw (0,0) circle (2.5 and 1.5) node[below,yshift=1.2cm]{$M\setminus\Omega$};
\draw (0,0) circle (5 and 4) node[xshift=2.8cm,yshift=-2.7cm]{$\p M$};
\draw [thick,->,>={Triangle}](-5,-2) -- (5,2) node[right]{$\gamma_{q,\xi}$};
\coordinate [circle,fill,inner sep=1.5pt,label={[xshift=2.5mm,yshift=-3.9mm]$x$}] (M) at ($(0,0)!3cm!(5,2)$);
\coordinate (A) at ($(M)!1!90:(5,2)$);
\coordinate (B) at ($(M)!1!270:(5,2)$);
\draw [ultra thick](A) -- (B) node[below]{$\Sigma_{x,\xi}$};
\end{tikzpicture}
\caption{The set $Q$, shaded in gray.}\label{Fig1}
\end{center}
\end{figure}

We can then choose $\tilde{\eta}$ which converges to the indicator function of $Q$, so that, in a neighbourhood of $x$, $\mathcal{T}_\xi\tilde{\eta}$ converges to the Dirac delta distribution on $\Sigma_{x,\xi}$. Observe that we can then choose $w_0=\phi_1\phi_2$, where $\phi_1$ converges to the delta distribution at $t\in(0,T)$ and $\phi_2$ converges to the delta distribution on $\gamma_{q,\xi}$. Thus, we can conclude from \eqref{final} that $a_K^{(1)}=a_K^{(2)}$, as required. This completes the induction argument and the proof of Lemma \ref{e_lemma}.
\end{proof}


\subsection{Proof of Theorem \ref{th:Eulidean}}\label{e_s3.3}

We begin by fixing some $p\in M\setminus\Omega$ as well as some small $\lambda>0$. We choose vectors $\xi_0$, $\xi_1$, and $\xi_2$, with the magnitude of $\xi_1$ and $\xi_2$ depending on $\lambda$, and the magnitude and direction of $\xi_0$ depending on $\lambda$. In particular, we choose such vectors so that they satisfy\[\xi_0=\xi_1+\xi_2,\]with $\xi_0$, $\xi_1$ and $\xi_2$ pairwise non-colinear, and such that we have\[|\xi_0|^2=1,\quad|\xi_1|^2=1-\lambda^2,\quad|\xi_2|^2=\lambda^2.\]
\begin{figure}[ht]
\begin{center}
\begin{tikzpicture}[scale=0.9]
\coordinate [circle,fill,inner sep=1.5pt,label={[label distance=0.1mm]0:$p\in M$}](A) at (0,0);
\coordinate (B) at (0,4);
\coordinate (C) at (-2,4);
\draw [->,>={Triangle[scale width=0.6]}](A) -- (C) node[midway,yshift=-1.5mm,xshift=-1.5mm]{$\xi_0$};
\draw [->,>={Triangle[scale width=0.6]}](A) -- (B) node[midway,xshift=2.6mm]{$\xi_1$};
\draw [->,>={Triangle[scale width=0.6]}](B) -- (C) node[midway,yshift=-2.5mm]{$\xi_2$};
\draw [decorate,decoration={brace,amplitude=5pt,mirror}]([yshift=1mm]B) -- ([yshift=1mm]C) node[midway,yshift=4mm]{$\lambda$};
\MarkRightAngle[size=6pt](C,B,A);
\end{tikzpicture}
\caption{The vectors $\xi_0$, $\xi_1$ and $\xi_2$.}
\end{center}
\end{figure}
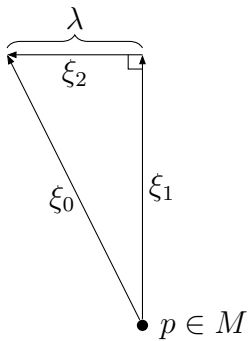

Let us define $q_j\in\p M$ to be the point at which the line $\gamma_{p,\xi_j}$ intersects $\p M$, defined such that the vector $p-q_j$ is a positive multiple of $\xi_j$. Then, fixing $N\in\N$ sufficiently large, we define the amplitude functions $a_k^{(j)}$ as follows. We first construct $a_0^{(j)}$ by letting $\xi=\xi_j$ in expression \eqref{amp0}, and define the remaining amplitudes $a_1^{(j)},\dots,a_N^{(j)}$ by letting $\xi=\xi_j$ in \eqref{ampsk} for all $y$ in the hyperplane $\Sigma_{q_j,\xi_j}$. Then, for $\tau>0$, we can define the approximate geometric optics solutions
\begin{equation*}
U_j(t,x)=e^{i(\tau\xi_j\cdot x-\tau^2|\xi_j|^2t)}\Bigg(\sum_{k=0}^N\tau^{-k}a_k^{(j)}(t,x)\Bigg).\\
\end{equation*} Further, for each $U_j$, we can define the corresponding exact solution $u_j=U_j+R_{\tau,j}$, where $R_{\tau,j}$ satisfies \eqref{remainder}. Letting $\eta\in C_0^\infty(M)$ satisfy $\eta=1$ in $M\setminus\Omega$, we define the source term $f_j=\big(i\p_t+\Delta+V\big)(\eta u_j)$. Then the function $\eta u_j$ solves
\begin{align}
\begin{cases}
i \p_t \mathcal{U}_j + \Delta \mathcal{U}_j + V \mathcal{U}_j = f_j & \text{on $(0,T) \times M$,}
\\
\mathcal{U}_j|_{x \in \p M} = 0,
\\
\mathcal{U}_j|_{t = 0} = 0.
\end{cases}
    \end{align}
We note that the sources $f_j$ are supported in $(0,T)\times\Omega$. Further, it can be shown that $f_j\in H^{2\kappa}_0$ is determined by the source-to-solution map $L_{\beta,V}$, up to any error $O(\tau^{-K})$ in the $H^{2\kappa}$-norm. To see this, let us first recall that $[(i\p_t+\Delta+V),\eta]=2\nabla\eta\cdot\nabla+\Delta\eta,$  this implies  that $f_j$ is given by the expression\[f_j=2\nabla\eta\cdot\nabla U_j+\Delta\eta U_j+2\nabla\eta\cdot\nabla R_j+\Delta\eta R_j.\]
We note that the first two terms on the right-hand side are uniquely determined by $L_{\beta,V}$ as a result of Lemma \ref{e_lemma}. On the other hand, we can apply the estimate \eqref{e_NRG_rem} to the last two terms to conclude that they are $O(\tau^{-N+4\kappa+2})$ in the $H^{2\kappa}$-norm.

Then, letting $\e_1,\e_2>0$ be small, we set $\e=(\e_1,\e_2)$ and define the source term $f=\e_1f_1+\e_2f_2$. For small enough $\varepsilon$, it holds that $f\in\mathcal{H}$, and we observe that\[-\frac{1}{2}\p_{\e_1}\p_{\e_2}L_{\beta,V}f|_{\e=0}=w|_{(0,T)\times\Omega},\]with $w$ the solution of the linear Schr\"odinger equation\begin{align}\begin{cases}i\p_tw+\Delta w+Vw=\beta\mathcal{U}_1\mathcal{U}_2\\w|_{x \in \p M} = 0\\w|_{t=0} = 0.\end{cases}\end{align}
Then it follows that\begin{equation}\label{e_before}-\frac{1}{2}\int_{(0,T)\times\Omega}\p_{\e_1}\p_{\e_2}L_{\beta,V}f|_{\e=0}\ \overline{f_0}\ \df x\,\df t = \int_{(0,T)\times M}w\ \overline{(i\p_t+\Delta+V)\mathcal{U}_0}\ \df x\, \df t.\end{equation}
Recall that $L_{\beta,V}$ is a continuous map from $\mathcal{H}$ into $H^{2\kappa}_0$, and that $f\in\mathcal{H}$ is determined by $L_{\beta,V}$ up to $O(\tau^{-N+4\kappa+2})$. Thus, the map $L_{\beta,V}$ determines the left-hand side of \eqref{e_before}, up to this error. We now integrate the right-hand side of (\ref{e_before}) by parts, and observe that it is given by\begin{equation}\label{e_after}\int_{(0,T)\times M}\beta\overline{\mathcal{U}_0}\mathcal{U}_1\mathcal{U}_2\, \df x\, \df t.\end{equation}We would like to approximate $\mathcal{U}_j$ in this integral by $\eta U_j$. Therefore, let $\kappa$ be large enough that $H^{2\kappa}_0$ is a Banach algebra, and let $N\geq4\kappa+4$. We see that, up to an error $O(\tau^{-2})$, the integral \eqref{e_after} coincides with the integral\[\int_{(0,T)\times M}\beta\eta^3\,\overline{U_0} U_1 U_2\, \df x\, \df t.\]
 Since $\phi(t)$ appearing in the definition of $a_0^{(j)}$ is arbitrary, it follows that $L_{\beta,V}$ determines for all $t\in(0,T)$ the integral
 \begin{equation}\label{e_I}\mathcal{I}=\int_M\beta(t,\cdot)\eta^3\,\overline{U_0(t,\cdot)}U_1(t,\cdot)U_2(t,\cdot)\,\df x,\end{equation}up to a small error $O(\tau^{-2})$. Henceforth, we shall supress this $t$-dependence in our notation. We now expand the above integral \eqref{e_I} in powers of $\tau$ as\[\mathcal{I}=\mathcal{I}_0+\mathcal{I}_1\tau^{-1}+O(\tau^{-2}).\]

Observe that the phases of the approximate geometric optics solutions cancel each other in the product $\overline{U_0}U_1U_2$, and further that $\eta=1$ in $\supp\big(\overline{U_0}U_1U_2\big)$. Therefore, it follows that the source to solution map $L_{\beta,V}$ determines the integrals\[\mathcal{I}_0=\int_M\beta a_0^{(0)}a_0^{(1)}a_0^{(2)}\, \df x, \quad \mbox{and}\quad \mathcal{I}_1=\sum_{|e|=1}\int_M\beta a_{e_0}^{(0)}a_{e_1}^{(1)}a_{e_2}^{(2)}\,\df x,\] where $e$ is a multi-index, $e=(e_0,e_1,e_2)\in(\N \cup\{0\})^3$. Then, letting $\chi_\delta$ in the definition \eqref{amp0} of $a_0^{(j)}$ converge to the indicator function of the interval $(-\delta,\delta)$, we obtain\[\mathcal{I}_0=\int_{P_\delta}\beta(x)\, \df x,\]where $P_\delta$ is a small neighbourhood of $p$ contained within a ball of radius $\delta$. Thus, by letting $\delta\rightarrow0$, we recover the quantity\[\lim_{\delta\rightarrow0}\frac{1}{|P_\delta|}\mathcal{I}_0=\beta(p).\]Since the choice of $p\in M\setminus\Omega$ was arbitrary, we recover the function $\beta(t,x)$ everywhere. 

To recover the potential $V$, we now consider the integral $\mathcal{I}_1$. We recall from (\ref{ampsk}) that $a_1^{(j)}$ is of the form $a_1^{(j)}=b_1^{(j)}+c_1^{(j)}$, where it holds that\begin{equation}\begin{gathered}\label{e_split}b_1^{(j)}(s\xi_j+y)=\frac{i}{2}\int_0^s\Big[(i\p_t+\Delta)a_0^{(j)}\Big](\tilde{s}\xi_j+y)\df\tilde{s},\\c_1^{(j)}(s\xi_j+y)=\frac{i}{2}\int_0^s\Big[Va_0^{(j)}\Big](\tilde{s}\xi_j+y)\df\tilde{s},\end{gathered}\end{equation}
for all $y$ in the hyperplane $\Sigma_{q_j,\xi_j}=\{x\in\R^n:\xi_j\cdot(x-q_j)\}$.
In particular, since $a_0^{(j)}$ is independent of $V$, so is $b_1^{(j)}$. Thus $L_{\beta,V}$ determines the quantity\[\mathcal{J}=\int_Mc_1^{(0)}a_0^{(1)}a_0^{(2)}\beta\, \df x+\int_Ma_0^{(0)}c_1^{(1)}a_0^{(2)}\beta\, \df x+\int_Ma_0^{(0)}a_0^{(1)}c_1^{(2)}\beta\, \df x.\]Then, by letting $\chi_\delta$ in the definition of $a_0^{(j)}$ converge to the indicator function of the interval $(-\delta,\delta)$, we deduce that $L_{\beta,V}$ determines the quantity\[\mathcal{J}_\delta=\sum_{j=0}^2\int_{P_\delta}\beta c_j\,\df x,\quad c_j(s\xi_j+y)=\frac{i}{2}\int_0^sV(\tilde{s}\xi_j+y)\, \df \tilde{s},\]
for a small neighbourhood $P_\delta$ of $p$. Then, since $\beta(p)$ is known and non-zero for almost all $p$, we can let $\delta\rightarrow0$ to recover the quantity\begin{equation}\label{2-to-1}\frac{1}{\beta(p)}\lim_{\delta\rightarrow0}\frac{1}{|P_\delta|}\mathcal{J}_\delta=\sum_{j=0}^2c_j(p).\end{equation}
It remains only to show that $V$ can be recovered from \eqref{2-to-1}. To this end, let $\widehat{\xi}$ denote the vector of unit length in the direction of $\xi_2$, so that $\xi_2=\lambda\widehat{\xi}$, and let $s_0\in\R$ be such that $\gamma_{q_2,\widehat{\xi}}(s_0)=p$. Then, it holds that\[-2ic_2(p)=\int_0^{s_0/\lambda}V(\tilde{s}\xi_2+q_2)\df\tilde{s}=\lambda^{-1}\int_0^{s_0}V(s\widehat{\xi}+q_2)\,\df s,\]and we can similarly check that $c_j(p)=O(1)$ as $\lambda\rightarrow0$ for $j\neq2$. Therefore, we have shown that we can recover from $L_{\beta,V}$ the quantity\[-2i\lim_{\lambda\rightarrow0}\Bigg(\lambda\sum_{j=0}^2c_j(p)\Bigg)=\int_0^{s_0}V(s\widehat{\xi}+q_2)\, \df s.\]But this is precisely the truncated ray transform of $V$, which we can differentiate with respect to $s_0$ to recover $V$.
\section{The Gross-Pitaevskii Equation}\label{e_s4}
We again fix $T>0$ and consider the case where $M$ is a Euclidean domain of $\R^n$ and $\Omega$ is a neighbourhood of $\p M$. For a potential $V\in C^\infty_0((0,T)\times M\setminus\Omega)$, and a coupling coefficient $\beta\in C^\infty_0((0,T)\times M\setminus\Omega)$ such that $\beta$ is non-zero almost everywhere in $\supp(V)$, we consider the problem of finding $u$ which, for a given source term $f$, solves the Gross-Pitaevskii equation
\begin{align}\label{GP}
\begin{cases}
(i \p_t + \Delta + V + \beta |u|^2) u = f & \text{on $(0,T) \times M$,}
\\
u|_{x \in \p M} = 0,
\\
u|_{t = 0} = 0.
\end{cases}
    \end{align}
We now let  $L_{\beta,V}$ denote the source-to-solution map, defined by\[L_{\beta,V}f=u\vert_{(0,T)\times\Omega},\]where $f$ is supported in $(0,T)\times\Omega$, and $u$ solves the Gross-Pitaevskii equation \eqref{GP} for the chosen source term $f$. We note that this $L_{\beta,V}$ is also a smooth map from some $\mathcal{H}$ to $H^{2\kappa}_0$ for large enough $\kappa$, by the same argument used for the non-linear Schr\"odinger equation in Section \ref{e_s2}. We now present the proof of Theorem \ref{th:GP}.

\begin{proof}[Proof of Theorem \ref{th:GP}]
    Let us now fix some $p\in M\setminus\Omega$, and some small $\lambda>0$. Let $\xi_0,\cdots,\xi_3$ be vectors in $\R^n$ which depend on $\lambda$, where the direction of $\xi_3$ is the same for all $\lambda>0$. Observe that it is possible to choose such vectors so that they satisfy\[\xi_0+\xi_1=\xi_2+\xi_3,\]with the $\xi_j$ pairwise non-colinear, and such that they satisfy the conditions\[|\xi_0|^2=1/2,\quad|\xi_1|^2=1/2,\quad|\xi_2|=1-\lambda^2,\quad|\xi_3|^2=\lambda^2.\]

\begin{figure}[ht]
\begin{center}
\begin{tikzpicture}[scale=0.9]
\coordinate [circle,fill,inner sep=1.5pt,label={[label distance=0.1mm]0:$p\in M$}](A) at (0,0);
\coordinate (B) at (0,4);
\coordinate (C) at (-2,4);
\coordinate (M) at ($(A)!0.5!(C)$);
\coordinate (P) at ($(M)!1!90:(C)$);
\draw [dashed](A) -- (C);
\draw [->,>={Triangle[scale width=0.6]}](A) -- (P) node[midway,xshift=-0.4mm,yshift=-2.6mm]{$\xi_0$};
\draw [->,>={Triangle[scale width=0.6]}](P) -- (C) node[midway,yshift=0.6mm,xshift=-2.2mm]{$\xi_1$};
\draw [->,>={Triangle[scale width=0.6]}](A) -- (B) node[midway,xshift=2.6mm]{$\xi_2$};
\draw [->,>={Triangle[scale width=0.6]}](B) -- (C) node[midway,yshift=-2.5mm]{$\xi_3$};
\draw [decorate,decoration={brace,amplitude=5pt,mirror}]([yshift=1mm]B) -- ([yshift=1mm]C) node[midway,yshift=4mm]{$\lambda$};
\MarkRightAngle[size=6pt](C,B,A);
\MarkRightAngle[size=6pt](A,P,C);
\end{tikzpicture}
\caption{The vectors $\xi_0$, $\xi_1$, $\xi_2$ and $\xi_3$.}
\end{center}
\end{figure}
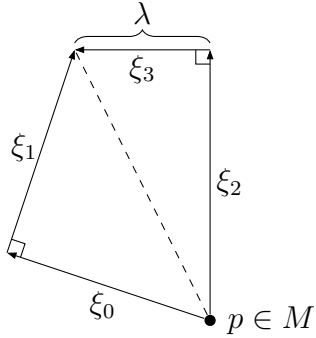
Let us define $q_j\in\p M$ to be the initial point of the line $\gamma_{p,\xi_j}$, as we did previously. Then for $\tau>0$ set
\begin{align*}\label{e_GPGO}U_j(t,x)=e^{i(\tau\xi_j\cdot x-\tau^2|\xi_j|^2t)}\Bigg(\sum_{k=0}^N\tau^{-k}a_k^{(j)}(t,x)\Bigg)
\end{align*}
where, for each $j$, we construct $a_0^{(j)}$ by letting $\xi=\xi_j$ in \eqref{amp0}, and define the remaining amplitudes $a_k^{(j)}$ by taking $\xi=\xi_j$ in \eqref{ampsk}. For each $U_j$, we once again define the corresponding exact solution $u_j=U_j+R_{\tau,j}$ as in \eqref{remainder}. Letting $\eta\in C_0^\infty(M)$ satisfy $\eta=1$ in $M\setminus\Omega$, we choose $f_j=\big(i\p_t+\Delta+V\big)(\eta u_j)$, and observe that $\eta u_j$ solves
\begin{align}
\begin{cases}
i \p_t \mathcal{U}_j + \Delta \mathcal{U}_j + V \mathcal{U}_j = f_j & \text{on $(0,T) \times M$,}
\\
\mathcal{U}_j|_{x \in \p M} = 0,
\\
\mathcal{U}_j|_{t = 0} = \mathcal{U}_j|_{t = T} = 0.
\end{cases}
    \end{align}
Then, letting $\e_1,\e_2,\e_3>0$ be small, we set $\e=(\e_1,\e_2,\e_3)$ and define the source term $f=\e_1f_1+\e_2f_2+\e_3f_3$. For small enough $\varepsilon$, it holds that $f\in\mathcal{H}$, and by linearizing the equation \eqref{GP} we deduce that\[-\frac{1}{6}\p_{\e_1}\p_{\e_2}\p_{\e_3}L_{\beta,V}f|_{\e=0}=w|_{(0,T)\times\Omega},\]where $w$ solves the linear Schr\"odinger equation.
\begin{align}
\begin{cases}
i\p_tw+\Delta w+Vw=\beta\Big(\overline{\mathcal{U}_1}\mathcal{U}_2\mathcal{U}_3+\mathcal{U}_1\overline{\mathcal{U}_2}\mathcal{U}_3+\mathcal{U}_1\mathcal{U}_2\overline{\mathcal{U}_3}\Big)\\w|_{x \in \p M} = 0\\w|_{t=0} = 0.
\end{cases}
\end{align}

Therefore, it holds that\[-\frac{1}{2}\int_{(0,T)\times\Omega}\p_{\e_1}\p_{\e_2}L_{\beta,V}f|_{\e=0}\ \overline{f_0}\, \df x\,\df t = \int_{(0,T)\times M}w\ \overline{(i\p_t+\Delta+V)\mathcal{U}_0}\, \df x\,\df t\]We can integrate by parts to see that the right-hand side of the above is given by\[I=\int_{(0,T)\times M}\beta\overline{\mathcal{U}_0}\overline{\mathcal{U}_1}\mathcal{U}_2\mathcal{U}_3\, \df x\,\df t+\int_{(0,T)\times M}\beta\overline{\mathcal{U}_0}\mathcal{U}_1\overline{\mathcal{U}_2}\mathcal{U}_3\, \df x\,\df t+\int_{(0,T)\times M}\beta\overline{\mathcal{U}_0}\mathcal{U}_1\mathcal{U}_2\overline{\mathcal{U}_3}\, \df x\,\df t.\]

Note that the phases of the geometric optics solutions cancel only in the first integral appearing in $I$. Therefore, since $\beta,a_j^{(k)}\in C^\infty_0((0,T)\times M)$, an integration by parts tells us that, choosing large enough $N$ in \eqref{e_GPGO}, for any $K\in\N$ we have\[I=\int_{(0,T)\times M}\beta\overline{\mathcal{U}_0}\overline{\mathcal{U}_1}\mathcal{U}_2\mathcal{U}_3\, \df x\,\df t+O(\tau^{-K}).\] Then, choosing $K\geq2$, we can expand the integral $I$ as\[I=I_0+I_1\tau^{-1}+O(\tau^{-2}),\] and arguing as we did for the non-linear Schr\"odinger equation in Section \ref{e_s3}, we deduce that the source-to-solution map $L_{\beta,V}$ determines for all $t\in(0,T)$ the integrals\[\begin{split}I_0=\int_M\beta a_0^{(0)}a_0^{(1)}a_0^{(2)}a_0^{(3)}\, \df x\quad\textrm{and}\quad I_1=\sum_{|e|=1}\int_M\beta a_{e_0}^{(0)}a_{e_1}^{(1)}a_{e_2}^{(2)}a_{e_3}^{(3)}\, \df x,\end{split}\]where $e$ is a multi-index, $e=(e_0,e_1,e_2,e_3)\in(\N\cup \{0\})^4$.
Next, letting $\chi_\delta$ in the definition of $a_0^{(j)}$ converge to the indicator function of the interval $(-\delta,\delta)$, we obtain
$I_0=\int_{P_\delta}\beta(x)dx$, where $P_\delta$ is a small neighbourhood of $p$ contained in a ball of radius $\delta$. Taking the limit as $\delta\rightarrow0$, we can recover the quantity $\lim_{\delta\rightarrow0}\frac{1}{|P_\delta|}I_0=\beta(p).$ Since the choice of $p\in M\setminus\Omega$ was arbitrary, we thus recover the function $\beta(t,x)$ everywhere.

To recover the potential $V$, we consider the integral $I_1$. Arguing as we did for the non-linear Schr\"odinger equation in Section \ref{e_s3}, we conclude that $L_{\beta,V}$ determines the quantity $\sum_{j=0}^nc_j(p)$ where we have\[c_j(s\xi_j+y)=\frac{i}{2}\int_0^sV(\tilde{s}\xi_j+y)d\tilde{s}\] for all $y\in\Sigma_{q_j,\xi_j}$. We now let $\widehat{\xi}$ denote the vector of unit length in the direction $\xi_3$, so that we have $\xi_3=\lambda\widehat{\xi}$, and let $s_0\in\R$ be such that $\gamma_{q_3,\widehat{\xi}}(s_0)=p$. Then, we can check that\[-2ic_3(p)=\lambda^{-1}\int_0^{s_0}V(s\widehat{\xi}+q_3)ds\]and similarly that $c_j(p)=O(1)$ as $\lambda\rightarrow0$ for $j\neq3$. Thus, we can recover from $L_{\beta,V}$ the quantity\[-2i\lim_{\lambda\rightarrow0}\Big(\lambda\sum_{j=0}^3c_j(p)\Big)=\int_0^{s_0}V(s\widehat{\xi}+q_3)ds.\]But this is just the truncated ray-transform of $V$, and we can differentiate with respect to $s_0$ in order to recover $V$.
\end{proof}
\section{The geometric case}\label{e_s5}
We again fix $T>0$ and let  $(M,g)$ be a compact Riemannian manifold  and $\Omega$ is a neighbourhood of $\p M$. For a potential $V\in C^\infty_0((0,T)\times M\setminus\Omega)$, and a coupling coefficient $\beta\in C^\infty_0((0,T)\times M\setminus\Omega)$ such that $\beta$ is non-zero almost everywhere in $\supp(V)$.  Let us recall the non-linear Schr\"odinger equation on $(M,g)$.
\begin{align}\label{IBVP_manifold}
\begin{cases}
(i \p_t + \Delta_g + V)u + \beta u^2= f & \text{on $(0,T) \times M$,}
\\
u|_{x \in \p M} = 0,
\\
u|_{t = 0} = 0.
\end{cases}
    \end{align}
Let us also recall the source-to-solution map as follows:
\[   L_{\beta,V}f = u|_{(0,T)\times \Omega},\]
where $f$ is supported in $(0,T)\times\Omega$, and $u$ solves the equation \eqref{IBVP_manifold} for the chosen source term $f$. We note that this $L_{\beta,V}$ is also a smooth map from some $\mathcal{H}$ to $H^{2\kappa}_{00}$ for large enough $\kappa$, by the same argument used for the non-linear Schr\"odinger equation in the Euclidean setting; see Section \ref{e_s2}. We now present the proof of Theorem \ref{th:main_result}. 

The rest of the section is organised as follows. In subsection \ref{subsec:Gaussian_beam} we give the Gaussian beam construction of the linearised Schr\"odinger equation on $(M,g)$. We then utilize those solutions in subsection \ref{subsec:boundary_source_determine} to determine amplitudes up to small error from the source-to-solution map. Finally in subsection \ref{subsec:main_result_thm_1.2} we present the proof of Theorem \ref{th:main_result}.   


\subsection{Gaussian beam construction}\label{subsec:Gaussian_beam}
In this section we present the construction of special solutions of  $  (\I \PD_t +\Delta_g +V)u=0$ on general manifolds. We closely follow the ideas from \cite{Dos_Jems}*{Section 3}. These special solutions concentrate near a given geodesic and they are usually called Gaussian beams in the literature. We next embed the manifold $ (M,g)$  into a  closed manifold $(N,g)$. To this end, we recall some results without their proofs.

\begin{lemma}[\cite{ma_sahoo_salo_anisotropic_high_fre}*{Lemma 6.7}]
	Let $(N,g)$ be a closed Riemannian manifold and $\gamma:(a,b)\rightarrow N$ be a unit speed non-trapping geodesic. Then 
$\gamma$ intersects itself only finitely many times.
\end{lemma}

\begin{lemma}[\cite{Dos_Jems}*{Lemma 3.4}]
	Let $F$ be a $C^{\infty}$ diffeomorphism of $(a,b) \times \{0\} \subset \Rn$ to a smooth $n$ dimensional manifold such that $F|_{(a,b) \times \{0\}}$ is injective and $DF(t,0)$ is invertible for $ t\in (a,b)$.
	If $[a_0,b_0]$ is a closed subinterval of $(a,b)$, then $F$ is a $C^{\infty}$ diffeomorphism in some neighbourhood of $ [a_0,b_0]\times \{0\} \in \Rn$. 
\end{lemma}

\begin{lemma}[\cite{Dos_Jems}*{Lemma 3.5}] \label{lem:Fermi}
	Let $(N,g)$ be a closed Riemannian manifold. Let $ \gamma$ be a unit speed geodesic with no loops. 
	Given a closed subinterval $[a_0,b_0]$ of $(a,b)$ such that $\gamma|_{[a_0,b_0]}$ intersects itself only at finitely many times $\{r_j\}_{j = 1}^{N}$ with $r_0 = a_0 < r_1 < \cdots < r_{N} < r_{N+1} = b_0$,
	there exists an open cover $\{(U_j,\phi_j)\}_{j=0}^{N+1}$ of $\gamma[a_0,b_0]$ consisting of coordinate neighbourhoods having the following properties:
	\begin{itemize}
		\item [1.] $\phi_j(U_j)= I_{j}\times B$ where
		$I_0 = (a, r_0 - \epsilon)$,
		$I_j = (r_{j-1} - 2\epsilon, r_j - \epsilon)$ with $j = 1,\cdots,N$,
		$I_{N+1} = (r_N - 2\epsilon, b)$,
		and $B=B(0,\delta)$ is an open ball in $\mathbb{R}^{n-1}$, where $\delta$ can be taken arbitrarily  small.
		\item [2.] $\phi_j(\gamma)=(r,0,\cdots,0)$ for $ r\in I_j$.
		\item [3.] $r_j\in I_j$ and $ \Bar{I}_j\cap \Bar{I}_k$ is empty unless $ |j-k|\le 1$.
		\item [4.] $\phi_j=\phi_k$ on $ \phi^{-1}_j\left((I_j\cap I_k) \times B\right)$.
	\end{itemize}
	Further, the metric in these coordinates satisfies $g^{jk}|_{\gamma(t)} = \delta^{jk}$, $\partial_i g^{jk}|_{\gamma(t)} = 0$.
\end{lemma}

We now present the construction of special solutions of 
\begin{align}\label{!LS}
    \begin{cases}
         (\I \PD_t +\Delta_g +V)u=0 \quad \mbox{in $M$}\\
         u(0,x)=0 \quad \mbox{in $M$} .
    \end{cases}
\end{align} known as Gaussian beams along a geodesic $\gamma$. We first assume that  $ \gamma$ does not intersect itself. Hence, we can have one coordinate chart of $\gamma$ denoted as $ (W,\phi)$ and $(r,y)$ be the local coordinate within this chart.
Let us consider the approximate Gaussian beam in the following form.\[U_\tau(t,r,y)=e^{i(\tau\psi(r,y)-\tau^2 t)}a(t,r,y),\]in the geodesic coordinates $(r,y)$. The phase function $\psi\in C^\infty(M)$ and amplitude $a_\tau\in C^\infty_0((0,T)\times M)$ are to be determined below. We write $P_V = \I \PD_t +\Delta_g +V$ and compute
\begin{equation}\label{!SU}P_V \big(e^{\I(\tau\psi-\tau^2 t)}a\big)=e^{\I(\tau\psi-\tau^2 t)}\Big(\tau^2(\mathcal{E}\psi)a+2i\tau\mathcal{T}a+P_Va\Big),\end{equation}where the operators $\mathcal{E},\mathcal{T}:C^\infty((0,T)\times M)\rightarrow C^\infty((0,T)\times M)$ are defined by
\[\mathcal{E}\psi:=1-\lr d\psi,d\psi \rn_g,\quad \mathcal{T}a:=\lr d\psi,da\rn_g+\frac{1}{2}(\Delta_g\psi)a.\]
We wish to solve the eikonal equation $\mathcal{E} \psi =0$ and the transport equation $\mathcal{T}a=0$ to determine the phase function and the amplitude on $(M,g)$. On simple manifolds one can construct these functions explicitly because the exponential map  is a global diffeomorphism, see \cite{DOS}. Hence, one can work with a global coordinate system to write the solutions globally on $(M,g)$, however this is not the case for non-simple manifolds. On a non-simple manifold, the idea is to prescribe the Taylor series of the functions $\psi$ and $a$ in a tubular neighbourhood of the geodesic $\gamma$. This can be obtained by imposing the following conditions on the eikonal and  transport equations
\begin{align}\label{!eikonal}
 \frac{\PD^\alpha}{\PD y^\alpha}(\mathcal{E}\psi)(r,0,\cdots,0)&=0\quad \forall r\in I,\\ \label{!transport}
 \frac{\PD^\alpha}{\PD y^\alpha}(\mathcal{T}a)(r,0,\cdots,0)&=0\quad \forall r\in I,   
\end{align}
where $ \alpha\in (\N\cup\{0\})^{n-1}$ is a multi-index with $ \abs{\alpha}\le N.$ 

\begin{remark}\label{rem_y_shift}
Below we will use the fact that if $a$ satisfies \eqref{!transport}
and $\beta \in \N^{n-1}$ then $\tilde a(r, y) = y^\beta a(r,y)$
also satisfies \eqref{!transport}.
\end{remark}

Let us construct $\psi$ satisfying the eikonal equation \eqref{!eikonal}. We write 
$\psi=\sum_{j=0}^N\psi_j(r,y)$, where $\psi_j(r,y)$ is a homogeneous function of degree $j$ in the $y$ variable.
By taking $|\alpha|=0$ in \eqref{!eikonal}, we obtain the following equation on $\gamma$
\[\sum_{k,l=1}^ng^{kl}|_{\gamma}\p_k\psi\p_l\psi=1,\quad \forall \,\,r\in I.\] 
Using the fact that $g^{kl}|_\gamma=\delta^{kl}$, this becomes $\sum_{l=1}^n(\p_l\psi)^2=1.$
Similarly, we can take $|\alpha|=1$ in (\ref{!eikonal}) and use the fact that $g^{kl}=\delta^{kl}$ and $\p_ig^{kl}=0$ on $\gamma$ to deduce that $\sum_{l=1}^n\p_{il}^2\psi\p_l\psi=0\quad\forall r\in I$
on $\gamma$ for any $i=2,\cdots,n$. These equations are satisfied by setting \begin{equation}\label{!psi01}\psi_0=r\quad\textrm{and}\quad\psi_1=0.\end{equation}
We continue by noting that equation \eqref{!eikonal} with $|\alpha|=2$ is equivalent to\[\sum_{k,l=0}^n \Big( 2g^{kl}\p_{ijk}^3\psi\p_l\psi+2g^{kl}\p_{ik}^2\psi\p_{jl}^2\psi+\p_{ij}^2g^{kl}\p_k\psi\p_l\psi+4\p_ig^{kl}\p_{jk}^2\psi\p_l\psi\Big)=0,\]for any $i,j=2,\cdots,n$. Then, since it holds on $\gamma$ that $\p_ig^{kl}=0$, that $\p_j\psi=1$ if $j=1$ and $\p_j\psi=0$ otherwise, and that $\p_1\p_{j}\psi=0$ if $j\neq1$, we can rewrite the above equation on $\gamma$ as:
\begin{equation}\label{!ric}\p_{ij}^2g^{11}+2\p_{1ij}^3\psi+2\sum_{k=2}^n\p_{ki}^2\psi\p_{kj}^2\psi=0\quad\forall r\in I.
\end{equation}
In light of the above, we set $\psi_2(r,y)=\frac{1}{2}\sum_{ij=1}^{n-1}H_{ij}(r)y^iy^j$, where $H$ is a smooth, symmetric, complex matrix with positive-definite imaginary part, $i.e., \Im H(r)>0\ \forall r\in I. $ 
Notice that $\p_{y^i}\p_{y^j}\psi=H^{ij}$. Therefore, in order to satisfy equation \eqref{!ric}, we require that $H(r)$ solves the matrix Riccati equation
\begin{equation}\label{!riccati}\frac{d}{dr}H+H^2+D=0\quad\forall r\in I,\end{equation}where the matrix $D$ is given by $D_{ij}=\frac{1}{2}\p_{y^i}\p_{y^j}g^{11}$. This is an ODE for $H$ and it  can be solved using \cite{KKL_Inv_bound_spect_pr}*{Lemma 2.56}, which shows that there exists a solution $H$ of equation \eqref{!riccati} with the desired properties, and this is sufficient to determine $\psi_2$.

We next continue to find the functions $\psi_3,\cdots,\psi_N$ by solving equation (\ref{!eikonal}) for $|\alpha|=3,\cdots,N$ respectively. For example, let us give a brief summary in the case where $|\alpha|=3$. We let $\p_y^\alpha=\p_i\p_j\p_k$ for $j=2,\cdots,n$, so that (\ref{!eikonal}) yields\[2\sum_{k,l=0}^n\Big(g^{kl}\p_l\p_y^\alpha\psi\p_m\psi+g^{lm}\p_{ijl}^3\psi\p_{km}^2\psi+g^{lm}\p_{ikl}^3\psi\p_{jm}^2\psi+g^{lm}\p_{jkl}^3\psi\p_{im}^2\psi\Big)+F_\alpha=0,\]where $F_\alpha$ depends only on $g^{lm}$ and $\psi_j$ for $j\leq2$. Since it holds on $\gamma$ that\[\sum_{l,m}g^{lm}\p_l\p_y^\alpha\psi\p_m\psi=\p_r\p_y^\alpha\psi,\]we observe that the coefficients $\p_y^\alpha\psi$ satisfy a system of linear ODEs on $\gamma$ with the right-hand side depending on $\psi_j$ and $\p_r\psi_j$ for $j\leq2$. Thus, by prescribing some initial condition at $r=r_0$, we can solve these systems uniquely to obtain $\psi_3$. The remaining polynomials $\psi_j$ of higher degree can then be constructed in the same manner. This completes the construction  of  phase function $\psi$.

We now focus on the construction of amplitudes. We start with writing $a$ as
\begin{equation}\begin{split}\label{eq transport}
a(t,r,y)=\phi(t)\chi\Big(\frac{|y|}{\delta'}\Big)\sum_{k=0}^N\tau^{-k}a_k(t,r,y),\quad a_k(t,r,y)=\sum_{j=0}^N\,a_{k,j}(t,r,y),\end{split}\end{equation}where for $k,j=0,\cdots,N$, it holds that $a_{k,j}$ are $j$-th degree homogeneous polynomials in the $y$-variable, $\chi\in C^\infty_0(\R)$ satisfies $\chi(s)=1$ for $|s|\leq\frac{1}{4}$ and $\chi(s)=0$ for $|s|\geq\frac{1}{2}$, and $\phi\in C^\infty_0(0,T)$ is a smooth cutoff.
Recalling the definition of $\mathcal{T}a$, we see that $a_0$ satisfies 
\begin{align}\label{eq_transport_v0}
   \PD^{\alpha}_{y} \left( \sum_{k,l=1}^n\big(g^{kl}\p_k\psi\p_la_0\big)+\frac{1}{2}\Delta_g\psi a_0\right)=0\quad \mbox{on $\gamma$,}
\end{align}
where $ \alpha\in (\N\cup\{0\})^{n-1}$ is a multi-index with $ \abs{\alpha}\le N.$ 
By Lemma \ref{lem:Fermi} we have  that $g^{kl}|_{\gamma(r)}=\delta^{kl}$. Thus for $ |\alpha|=0$, this preceding equation reduces to $\frac{d}{dr}a_{0,0}+\frac{1}{2}\tr(H)a_{0,0}=0\quad\forall r\in I. $ This is an ODE  for $a_{0,0}$ along $ \gamma$, and its solution can written as:
\begin{equation}\label{!amp00}a_{0,0}=c_0e^{-\frac{1}{2}\int_{r_0}^r\tr(H)(s)ds},
\end{equation}
where $c_0$ is a constant chosen so that $a_{0,0}(r_0)=1$. This implies $c_0=1$.

The subsequent terms $a_{0,j}$ for $j=1,\cdots,N$ can be constructed by considering equation \eqref{eq_transport_v0} with $|\alpha|=j$, which reduces to solving linear first order ODEs on $\gamma$. Indeed, taking $|\alpha|=j$ in \eqref{eq_transport_v0} and recalling the definition of $\mathcal{T}$, we obtain an equation of the form
\begin{align}\label{eq_transport_v_0j}
\p_ra_{0,j}+\tr(H)a_{0,j}+\Pi_j=0\quad\forall r\in I\quad \mbox{and} \quad a_{0,j}(r_0)=0,
\end{align}
where $\Pi_j$ is a homogeneous polynomial of degree $j$ in the $y$-variable, whose coefficients depend only upon $\{a_{0,l}\}_{l=0}^{j-1}$ and $\{\psi_l\}_{l=0}^{j+2}$. 

To construct the subsequent amplitudes $a_k$, we need to solve the equation 
\begin{align}\label{eq_transport_v_k}
    \mathcal{T}a_k+ P_V a_{k-1}=0 \quad \mbox{ for $k\ge 1$ up to $N$-th order on $\gamma$.}
\end{align}
 This can be accomplished by much the same argument used for $a_0$, and so we omit the details and refer  \cites{Dos_Jems,Kenig_Salo_Survey}. However, let us establish an analogue of \eqref{e_split} for the approximate Gaussian beams. We consider equation \eqref{eq_transport_v_k} with $|\alpha|=0$ and $k=1$,  on $\gamma$. This gives \[2i\Big(\frac{d}{dr}a_{1,0}+\frac{1}{2}\tr(H)a_{1,0}\Big)=-(i\p_t+\Delta)a_{0,0}-Va _{0,0}.\]
Therefore, we may choose\begin{equation}\begin{aligned}\label{!split}
&a_{1,0}(r)=b_{1,0}(r)+c_{1,0}(r), \quad \mbox{where}\\
&b_{1,0}(r)=e^{-\frac{1}{2}\int_{r_0}^r\tr(H)(s)ds}\int_{r_0}^r\frac{i}{2}e^{-\frac{1}{2}\int_{r_0}^s\tr(H)d\tilde{s}}(i\p_t+\Delta)a_{0,0}(s) ds\\&c_{1,0}(r)=e^{-\frac{1}{2}\int_{r_0}^r\tr(H)(s)ds}\int_{r_0}^r\frac{i}{2}e^{-\frac{1}{2}\int_{r_0}^s\tr(H)d\tilde{s}}Va_{0,0}(s)ds.
\end{aligned}
\end{equation}
We recall that $a_0$ satisfies \eqref{eq_transport_v0}, whence it follows that $b_{1,0}$ does not depend on $V$. This completes the construction $a_\tau$.

The function $U_\tau=e^{i(\tau\psi-\tau^2t)}a_\tau$ is then an approximate Gaussian beam of order $N$ , provided that $\delta'$ is small enough and it satisfies 
\begin{enumerate}
\item The equations \eqref{!eikonal}, and \eqref{!transport} are satisfied.
\item $\Im(\psi)|_\gamma=0$, i.e. the imaginary part of the phase vanishes along $\gamma$.
\item There exists $c\geq0$ such that $\Im(\psi)(r,y)\geq c|y|^2$ for all $(r,y)\in \Phi(W)$.
\end{enumerate}

We now convert the approximate Gaussian beams into exact solutions of \eqref{!LS} via the addition of a suitable remainder term $i.e.,\, u= U_\tau + R_{\tau}$, where the remainder term satisfies 
\begin{align}\label{eq_rmdr}
    P_V R_\tau &= -P_V U_{\tau} \quad \mbox{in}\quad (0,T)\times M\\
 R_{\tau}|_{t=0}&=0.
\end{align}
Moreover, by Proposition \ref{!GBProp} one has 
\begin{align}\label{!GBRest}
    \nrm[H^s((0,T)\times M]{R_\tau} \lesssim \nrm[H^s((0,T)\times M]{P_V U_\tau}\lesssim \tau^{2s+\frac{3-N}{2}}. 
\end{align}




\begin{proposition}\label{!GBProp}
Let $U_\tau=e^{i(\tau\psi-\tau^2t)}a_\tau$ be an approximate Gaussian beam of order $N$. Then for $\tau\gg1$ we have\begin{equation}\label{!estimates}\norm{P_VU_\tau}_{H^s((0,T)\times W)}\lesssim\tau^{\frac{3-N}{2}+2s},\qquad\norm{U_\tau}_{H^s((0,T)\times W)}\lesssim \tau^{2s}.\end{equation}
\end{proposition}
\begin{proof}
Using the fact that $ \Im \psi$ satisfies $\Im(\psi)(r,y)\geq c|y|^2$, we obtain $|e^{i(\tau\psi-\tau^2 t)}|\leq Ce^{-\frac{1}{4}c\tau|y|^2}$ for sufficiently small $y$. Thus, since $a_\tau$ is a smooth, compactly supported function, the estimate\[\begin{split}\norm{U_\tau}_{H^s((0,T)\times W)}\leq& C\tau^{2s}\norm{U_\tau}_{L^2((0,T)\times W)}\\\lesssim&\tau^{2s}\norm{e^{-\frac{1}{4}c\tau|y|^2}\chi(|y|/\delta')}_{L^2((0,T)\times W)}=O(\tau^{2s}),\end{split}\] provided that $\delta'$ is small enough. By using equations \eqref{!eikonal}, \eqref{!transport} in the identity \eqref{!SU}, we deduce similarly that\begin{equation}\label{!1}|\p_z^\sigma P_VU_\tau|\lesssim\tau^{|\sigma|}|e^{i(\tau\psi-\tau^2 t)}|\Big(C_0\tau^2|y|^{N+1}+C_1\tau|y|^{N+1}+C_2\tau^{-N}\Big),\end{equation}for any $\sigma\in\N^{n}$. Further, we deduce that\begin{equation}\label{!2}|\p_t^m P_VU_\tau|\lesssim \tau^{2m}|e^{i(\tau\psi-\tau^2t)}|\Big(C_0\tau^2|y|^{N+1}+C_1\tau|y|^{N+1}+C_2\tau^{-N}\Big),\end{equation}by the same argument. Thus, it follows from \eqref{!1} and \eqref{!2} that\[\begin{split}\norm{P_VU_\tau}_{H^{s}((0,T)\times W)}\lesssim\tau^{2s}\norm{e^{-\frac{1}{4}c\tau|y|^2}(\tau^2|y|^{N+1}+\tau^{-N})}_{L^2((0,T)\times W)}= O(\tau^{\frac{3-N}{2}+2s}).\end{split}\]
\end{proof}
\begin{remark}\label{!conj}
  Observe that, if $U_\tau=e^{i(\tau\psi-\tau^2 t)}a_\tau$ is an approximate Gaussian beam of order $N$ along $\gamma$, then $\overline{U}_\tau=e^{-i(\tau\overline{\psi}-\tau^2t)}\overline{a}_\tau $ also satisfies the estimates of Proposition \ref{!GBProp}.
\end{remark}

\subsection{Determination of the boundary sources}\label{subsec:boundary_source_determine}
Let us recall that  \[ \Lc_V= \PD_{\epsilon} L_{V,\beta} (\epsilon f)|_{\epsilon=0}.\] Here we show that $\Lc_V$ determines the amplitudes up to a small error term. This is similar to Lemma \ref{e_lemma} in the Euclidean setting, however the proof is more involved in the geometric setting.
\begin{lemma}\label{lm:determining_amplitudes}
   Suppose that $ \Lc_{V_1}= \Lc_{V_2}$ in $ (0,T) \times \Omega$. Let $ U_{\tau, j}= e^{\I (\tau \psi -\tau^2 t)} a^j$ be an approximate Gaussian beam  solution of \eqref{!LS} concentrate near a geodesic $ \gamma$.  Then   $a^1_k=a^2_k$ on $  \Omega \cap \gamma $  up to higher order for all integers $k\ge 0$.
\end{lemma}
\begin{proof}

 The proof is based on induction on $k$.  Fix a geodesic $ \gamma$ and let $(r,y)$ be the Fermi coordinates in a tubular neighbourhood of $\gamma$. Let $\ell$ be the length of the geodesic $\gamma$ and we choose $0<r_1<r_2<\ell$ such that $(r,y)\in \Omega$ when $ r<r_1$ or $r>r_2$.  Now proving Lemma \ref{lm:determining_amplitudes} is equivalent to proving the following:  for any  integer $M\ge 0$ and for all integers $k\ge 0$
\begin{align}\label{induction_argument}
  (\PD^{\alpha}_{y} a_k^1)(r, 0)=  (\PD^{\alpha}_{y} a_k^2)(r, 0) \quad \mbox{ for $r<r_1$ or $r>r_2$, and  multi-indices $|\alpha|=M$.}
 \end{align}
 We divide the proof into several steps.\smallskip

 \textbf{Step 1.} We show that 
 $ (\PD^{\alpha}_{y} a_0^1)(r, 0)=  (\PD^{\alpha}_{y} a_0^2)(r, 0) \quad\mbox{for any multi-index $|\alpha|=M$.}$\smallskip
 
 For $\abs{\alpha}=0$ we have that $ a^1_0-a^2_0$ satisfies the transport equation 
 \[   \mathcal{T}(a^1_0-a^2_0)=0 \quad \mbox{on $\gamma$ and  $(a^1_0-a^2_0)(0)=0$.}\]
As $a^1_0-a^2_0 = 0$ at $\gamma(0)$,
this implies $(a^1_0-a^2_0)(r,0)=0$ by unique solvability of ordinary differential equations. Suppose that 
\begin{align}\label{eq_induction_1}
    (\PD^{\alpha}_{y} a_0^1)(r, 0)=  (\PD^{\alpha}_{y} a_0^2)(r, 0) \quad\mbox{for any multi-index $|\alpha| \le M$. }
\end{align}
As $\mathcal Ta^j_0$ vanishes to a high order on $\gamma$,
it follows from \eqref{eq_induction_1} that
for a multi-index $|\alpha| = M + 1$ there holds
    \begin{align}
\mathcal T \p_y^\alpha (a^1_0-a^2_0) = 0 \quad \text{on $\gamma$}.
    \end{align}
As $\p_y^\alpha (a^1_0-a^2_0) = 0$ at $\gamma(0)$, we see that 
\eqref{eq_induction_1} holds for $|\alpha| = M + 1$. This completes the induction argument as well as \textbf{Step 1}.

 \smallskip
 
\textbf{Step 2.} Here  we show that \eqref{induction_argument} holds true for $k=K$, and $ |\alpha|=0$.\smallskip

We argue by induction and assume that \eqref{induction_argument} is true for all $k\le K-1$. 
Let $\eta \in C_c^{\infty}(M)$ such that $\eta=1$ in $M\setminus\Omega$.
We set $f_j=(\I \PD_t +\Delta_g +V_j)(\eta u_j)$, 
where $u_j$ solves 
\begin{equation}
    \begin{cases}
         (\I \PD_t +\Delta_g +V_j)u_j=0\quad &\mbox{in} \quad (0,T) \times M\\
        u_j=0 \quad & \mbox{at} \quad t=0,
    \end{cases}
     \end{equation}
and coincides with the approximate Gaussian beam 
    \begin{align}
U_j= e^{\I (\tau \psi(r,y)- \tau^2 t)} a^j(t,r,y)= e^{\I (\tau \psi(r,y)- \tau^2 t)} \sum_{k=0}^N\, a_k^j (t,r,y)
    \end{align}
up to a small error of order $ \mathcal{O}(\tau^{-N})$ in the sense of $L^2$.
Clearly,  the function $ \eta u_j$ satisfies 
\begin{align}
    \begin{cases}
       (\I \PD_t +\Delta_g +V_j)u_j=f_j\quad &\mbox{in} \quad (0,T) \times M\\
        u_j=0 \quad & \mbox{on} \quad \PD M\\
        u_j=0 \quad & \mbox{at} \quad t=0,
    \end{cases}
\end{align}
with sources $f_j$ supported in $(0,T) \times \Omega$. 
Further, let us consider a solution of 
\begin{align}
    \begin{cases}
         (\I \PD_t +\Delta_g +V_2)w=0\quad &\mbox{in} \quad (0,T) \times M\\
      w=0 \quad & \mbox{at} \quad t=T.
    \end{cases}
\end{align}
that coincides with the approximate Gaussian beam
    \begin{align}
e^{\I (\tau \psi(r,y)- \tau^2 t)} \sum_{k=0}^N\, w_k (t,r,y),
    \end{align}
up to a small error in the same sense.

Let $\tilde \eta \in C_c^{\infty}(M)$ such that $\tilde \eta=1$ in $M\setminus\Omega$.
We set $h=(\I \PD_t +\Delta_g +V_2)(\tilde \eta w)$. Analogously to \eqref{not_Alessandrini}, we have
\begin{align}\label{eq_1.7}
  \lr (\Lc_{V_j}-\Lc_{V_2})f_j,h\rn_{L^2((0,T)\times\Omega)}= \lr \eta u_j,h\rn_{L^2((0,T)\times\Omega)} - \lr f_j,\Tilde{\eta} w\rn_{L^2((0,T)\times\Omega)}.   
\end{align}
Subtracting \eqref{eq_1.7}
 for $j=1$ from \eqref{eq_1.7} for $j=2$ we obtain
 \begin{equation}\label{integral_identity}
     \begin{aligned}
  0=  &\agl[ e^{\I (\tau \psi(r,y)- \tau^2 t)} \eta\tau^{-K}( a^1_{K}-a^2_K), e^{\I (\tau \psi(r,y)- \tau^2 t)} 2\I \tau \mathcal{T} \Tilde{\eta} \,w_0+\mathcal{O}(1)]_{L^2((0,T)\times\Omega)}\\&- \agl[e^{\I (\tau \psi(r,y)- \tau^2 t)}(2\I \tau \mathcal{T} \eta+2\lr\nabla \eta,\nabla\rn_g+\Delta_g \eta) \tau^{-K} ( a^1_{K}-a^2_K), e^{\I (\tau \psi(r,y)- \tau^2 t)}\Tilde{\eta} w_0]_{L^2((0,T)\times\Omega)}\\&\quad-\agl[e^{\I (\tau \psi(r,y)- \tau^2 t)}(2\I \tau \mathcal{T} \eta+2\lr\nabla \eta,\nabla\rn_g+\Delta_g \eta) \tau^{-K} ( a^1_{K}-a^2_K),  \mathcal{O}(\tau^{-1})]_{L^2((0,T)\times\Omega)} \\
  &\qquad+ \mathcal{O}(\tau^{-K-1})+ \mathcal{O}(|y|^{\infty}).
 \end{aligned}
 \end{equation}
 We next consider the  coefficient of $\tau^{-K+1}$ from the above integral  identity and simplify. To this end, recall that we  choose $0 < r_1 < r_2 < \ell$ such that in Fermi coordinates $(r,y) \in \Omega$ when $r < r_1$ or $r > r_2$ and $\eta(r,y) = \tilde \eta(r,y) = 0$ when $r > \ell$. Let $I$ be the coefficient of $\tau^{-K+1}$ in the above expansion. Since $ \eta$ and $\tilde{\eta}$ are real valued and $\mathcal{T}(\cdot)$ is a complex valued function, this implies
 \begin{align}
     I&= 2\I \int_0^T \int_{\Omega} e^{-2\tau \Im \psi} (\eta  \:\overline{\mathcal{T} \Tilde{\eta}}+ \Tilde{\eta}\mathcal{T}\eta) w_0 \, ( a^1_{K}-a^2_K) \df v_g\,\df t \nonumber\\
     &=2\,\I \int_0^T  \int_0^{r_1} \int_{\R^{n-1}} e^{-2\tau \Im \psi} (\eta  \:\overline{\mathcal{T} \Tilde{\eta}}+ \Tilde{\eta}\mathcal{T}\eta) w_0 \, ( a^1_{K}-a^2_K) \sqrt{|g|}\, \df r\,\df y\,\df t \notag\\& \quad+2\,\I \int_0^T  \int_{r_2}^{\ell}\int_{\R^{n-1}} e^{-2\tau \Im \psi} (\eta  \:\overline{\mathcal{T} \Tilde{\eta}}+ \Tilde{\eta}\mathcal{T}\eta) w_0 \, ( a^1_{K}-a^2_K) \sqrt{|g|}\, \df r\,\df y\,\df t.
 \end{align}
Since $ a_K^1= a_K^2$ up to higher order before the geodesic enters the domain $M\setminus\Omega$, that is when $r<r_1$, there holds \[ \int_0^T  \int_0^{r_1} \int_{\R^{n-1}} e^{-2\tau \Im \psi} (\eta  \:\overline{\mathcal{T} \Tilde{\eta}}+ \Tilde{\eta}\mathcal{T}\eta) w_0 \, ( a^1_{K}-a^2_K) \sqrt{|g|}\, \df r\,\df y\,\df t =\mathcal{O}(|y|^{\infty}).\] 
This further entails, up to $\mathcal{O}(|y|^{\infty})$
\begin{align*}
    I= 2\,\I \int_0^T  \int_{r_2}^{\ell}\int_{\R^{n-1}} e^{-2\tau \Im \psi} (\eta  \:\overline{\mathcal{T} \Tilde{\eta}}+ \Tilde{\eta}\mathcal{T}\eta) w_0 \, ( a^1_{K}-a^2_K) \sqrt{|g|}\, \df r\,\df y\,\df t.
\end{align*}
 Observe that, in the Fermi coordinates $(r,y)$  we have the following expression for $ \psi$:
\[\psi(r,y)= r +\frac{1}{2} H(r)y\cdot y+ \mathcal{O} (|y|^3)  \implies \mathrm{Im}(\psi) = \mathrm{Im}(\frac{1}{2} H(r)y\cdot y+ \mathcal{O} (|y|^3) ).\] 
 We next perform the change of variable $ y\rightarrow y/\sqrt{\tau}$ and obtain 
 \begin{align*}
     I=  &\frac{2\I}{\tau^{(n-1)/2}} \int_0^T  \int_{r_2}^{\ell}\int_{\R^{n-1}} e^{-\Im H(r) y\cdot y+\tau^{-1/2} \mathcal{O}(|y|^3)} (\eta  \:\overline{\mathcal{T} \Tilde{\eta}}+ \Tilde{\eta}\mathcal{T}\eta)(r,y/\sqrt{\tau})\\& \qquad\qquad\qquad\qquad\times w_0(t,r,y/\sqrt{\tau}) \, ( a^1_{K}-a^2_K)(r,y/\sqrt{\tau}) \sqrt{|g|}(r,y/\sqrt{\tau})\, \df y\, \df r\,\df t\\
     = &\frac{2\I}{\tau^{(n-1)/2} } \int_0^T  \int_{r_2}^{\ell} \frac{1}{\sqrt{\det\Im(H(r))}}\,\int_{\R^{n-1}} e^{- |y|^2-\tau^{-1/2} \mathcal{O}(|y|^3)} (\eta  \:\overline{\mathcal{T} \Tilde{\eta}}+ \Tilde{\eta}\mathcal{T}\eta)(r,(\cdot)/\sqrt{\tau})\\& \qquad\qquad\qquad\qquad\times w_0(t,r,\cdot/\sqrt{\tau}) \, ( a^1_{K}-a^2_K)(r,\cdot/\sqrt{\tau}) \sqrt{|g|}(r,\cdot/\sqrt{\tau})\, \df y\, \df r\,\df t.
 \end{align*}
We multiply \eqref{integral_identity} by $\tau^{-K+1}$ and  let $ \tau \rightarrow \infty$ to conclude
 \begin{align*}
  0= &2\I\int_0^T  \int_{r_2}^{\ell} \frac{1}{\sqrt{\det\Im(H(r))}}\,\int_{\R^{n-1}} e^{- |y|^2} (\eta  \:\overline{\mathcal{T} \Tilde{\eta}}+ \Tilde{\eta}\mathcal{T}\eta)(r,0)\\& \qquad\times w_0(t,r,0) \, ( a^1_{K}-a^2_K)(r,0)  \df y\, \df r\,\df t.
 \end{align*}
  Next choosing 
$w_0(t,r,0)=\phi_1(t)\, \phi_2(r)$ for some $C^{\infty}_c$ functions in their respected variables, from above we deduce that,
\begin{align*}
 0= 2\I \int_{r_2}^\ell \frac{1}{\sqrt{\det(\Im H(r))}} \, \phi_2(r)(\eta  \:\overline{\mathcal{T} \Tilde{\eta}}+ \Tilde{\eta}\mathcal{T}\eta)(r,0)\, ( a^1_{K}-a^2_K)(r,0) \df r.
\end{align*}
Further choosing $\eta=1$ in the support of $\Tilde{\eta}$ we obtain
\begin{align*}
   0=& 2\I \int_{r_2}^\ell \frac{1}{\sqrt{\det(\Im H(r))}} \, \phi_2(r)(  \PD_r\Tilde{\eta}+\frac{1}{2}\Tilde{\eta}\overline{\Delta_g\psi(r,0)}  + \frac{1}{2} \Tilde{\eta}\Delta_g\psi(r,0))\, ( a^1_{K}-a^2_K)(r,0) \df r\\
    &=2\I \int_{r_2}^\ell \frac{1}{\sqrt{\det(\Im H(r))}} \, \phi_2(r)  [\PD_r\Tilde{\eta}+ \Tilde{\eta} \,\mathrm{Re} (tr(H))(r)]\, ( a^1_{K}-a^2_K)(r,0) \df r\\
   &= 2\I \int_{r_2}^\ell \frac{\phi_2(r)}{\sqrt{\det(\Im H(r))}} \, \,  e^{-\int_{r_0}^r  \mathrm{Re} (tr(H))(s)\df s}  \PD_r \big[\Tilde{\eta} \, e^{\int_{r_0}^r  \mathrm{Re} (tr(H))(s)\df s}\big]\, ( a^1_{K}-a^2_K)(r,0) \df r.
\end{align*}
Let $r_3 \in (r_2, \ell)$.
We can choose $\Tilde{\eta} \in C_c^{\infty}$ such that $\Tilde{\eta}\, e^{\int_{r_0}^r  \mathrm{Re} (tr(H))(s)\df s}  $ converges to the indicator function of $[r_1, r_3]$ near $\gamma$. Then $ \PD_r [\Tilde{\eta}\, e^{\int_{r_0}^r  \mathrm{Re} (tr(H))(s)\df s} ]$ will converge to $ \delta_{r_1}-\delta_{r_3}$, and
\begin{align*}
  \frac{\phi_2(r_3)}{{\sqrt{\det(\Im H(r_3))}}}\, e^{-\int_{r_0}^{r_3}  \mathrm{Re} (tr(H))(s)\df s}   (a^1_{K}-a^2_K)(r_3,0)=0.
\end{align*} 
Since $ \phi_2$ solves some transport equation with non-zero initial condition on $\gamma$, this implies $\phi_2(r_3)\neq 0$, and  $ \Im H(r)$ is positive definite entails that $ \det(\Im H)(r_3)\neq 0$. This along with preceding equation implies $  e^{\int_{r_0}^{r_3}  \mathrm{Re} (tr(H))(s)\df s}  (a^1_{K}-a^2_K)(r_3,0)=0$. Moreover, using $ e^{\int_{r_0}^{r_3}  \mathrm{Re} (tr(H))(s)\df s}  \neq 0$ we conclude $ (a^1_{K}-a^2_K)(r_3,0)=0${\color{red}.} 
We have shown \eqref{induction_argument} in the case $k = K$ and $\alpha = 0$.

\smallskip
\textbf{Step 3.}  We
show that $(a^1_{K,\alpha}-a^2_{K,\alpha})(r,0)=0$  for any multi-index $ \alpha$ with $ |\alpha|=p$.

\smallskip
Here $a^j_{K}(r,y) = \sum_{\alpha} a^j_{K,\alpha}(r) y^\alpha$.
The proof is by induction in $|\alpha| = p$ and suppose that the claim holds for $p$. Consider a multi-index $\alpha$ such that $|\alpha| = p + 1$.
We argue as in \textbf{Step~2} but rescale the Gaussian beam $w$ by $y^\beta$ with $|\beta| = p + 1$, cf. Remark \ref{rem_y_shift}.
Then 
\begin{align}
     I &= 2\,\I \int_0^T  \int_{r_2}^{\ell}\int_{\R^{n-1}} e^{-2\tau \Im \psi} (\eta  \:\overline{\mathcal{T} \Tilde{\eta}}+ \Tilde{\eta}\mathcal{T}\eta) w_{0} y^\beta (a^1_{K,\alpha}-a^2_{K,\alpha})(r)\, y^{\alpha} \sqrt{|g|}\, \df r\,\df y\,\df t \\&\quad + \mathcal{O}(|y|^{2p+3}). 
 \end{align}
We now proceed exactly as above  to conclude
 \begin{align}
     I= &\frac{2\I }{\tau^{(n-1)/2} }\tau^{-(2p+2)} \int_0^T  \int_{r_2}^{\ell} \frac{1}{\sqrt{\det\Im(H(r))}}\,\int_{\R^{n-1}} e^{- |y|^2-\tau^{-1/2} \mathcal{O}(|y|^3)} (\eta  \:\overline{\mathcal{T} \Tilde{\eta}}+ \Tilde{\eta}\mathcal{T}\eta)(r,(\cdot)/\sqrt{\tau})\\& \qquad\times w_{0}(t,r)\, y^\beta  \, ( a^1_{K,\alpha}-a^2_{K,\alpha})(r)\,y^{\alpha}\,\sqrt{|g|}(r,\cdot/\sqrt{\tau})\, \df y\, \df r\,\df t + \mathcal{O}(\tau^{-(2p+3)/2}).
 \end{align}
Multiplying above by $\tau^{K+2p+1}$
and letting $\tau \rightarrow \infty$ we conclude that
    \begin{align}
   0= \frac{2\I } {\tau^{(n-1)/2} } \int_0^T  \int_{r_2}^{\ell} \frac{1}{\sqrt{\det\Im(H(r))}}\,&\int_{\R^{n-1}} e^{- |y|^2} y^\beta y^{\alpha}\, ( a^1_{K,\alpha}-a^2_{K,\alpha})(r)\, \\& \times(\eta  \:\overline{\mathcal{T} \Tilde{\eta}}+ \Tilde{\eta}\mathcal{T}\eta)(r,(\cdot)/\sqrt{\tau})\, w_{0}(t,r) \, \df y\, \df r\,\df t.
 \end{align}
 Further choosing $w_{0}(t,r,0)=\phi_1(t)\, \phi_2(r)$ for some $C^{\infty}_c$ functions in their respected variables, from above we deduce that,
\begin{align}
0=\int_{\R^{n-1}} e^{- |y|^2} y^{\beta} y^{\alpha}\,\df y\times
2\I \int_{r_2}^\ell \frac{1}{\sqrt{\det(\Im H(r))}} \,  ( a^1_{K,\alpha}-a^2_{K,\alpha})(r) \phi_2(r)(\eta  \:\overline{\mathcal{T} \Tilde{\eta}}+ \Tilde{\eta}\mathcal{T}\eta)(r,0)\,  \df r.    
\end{align}
This along with below Lemma \ref{lm:seprating tensor_n_bigger_3} implies 
\begin{align}
0= \int_{r_2}^\ell \frac{1}{\sqrt{\det(\Im H(r))}} \,  ( a^1_{K,\alpha}-a^2_{K,\alpha})(r) \phi_2(r)(\eta  \:\overline{\mathcal{T} \Tilde{\eta}}+ \Tilde{\eta}\mathcal{T}\eta)(r,0)\,  \df r.    
\end{align}
Next proceed similarly as in \textbf{Step 2} one can conclude that 
\begin{align}
  ( a^1_{K,\alpha}-a^2_{K,\alpha})(r)=0 \quad \mbox{for } \,\,\abs{\alpha}=p+1.  
\end{align}
This completes the induction step as well as the  proof of Lemma \ref{lm:determining_amplitudes}.
\end{proof}
\begin{lemma}\label{lm:seprating tensor_n_bigger_3}
     Let $n\ge 1$.  Suppose $ \int_{\R^{n}} e^{- |y|^2}  a_{\alpha} y^{\alpha}\, y^{\beta} \df y=0 $ for any multi-index $\beta$ and for some constant tensor $a_{\alpha}$, then $a_{\alpha}=0$.
\end{lemma}
\begin{remark}
 Note that we prove Lemma \ref{lm:seprating tensor_n_bigger_3} for any multi-index $ \beta$ with $\abs{\alpha}=\abs{\beta}$. This is stronger than Lemma \ref{lm:seprating tensor_n_bigger_3} and we need this in our analysis.   
\end{remark}
\begin{proof}
We start with writing $ a_{\alpha} y^{\alpha} y^{\beta}$ as 
\[ a_{\alpha} y^{\alpha} y^{\beta} = a_{\alpha_1\cdots \alpha_n} y^{\alpha_1}_1  \ldots y^{\alpha_n}_n\, y^{\beta_1}_1\,  \ldots \, y^{\beta_n}_n\,\,\mbox{with} \,\, \abs{\alpha}=\abs{\beta}=k. \]
Here we used Einstein summation convention for repeated indices. Note that the integral $ \int_{\R^{n}} e^{- |y|^2}  a_{\alpha} y^{\alpha}\, y^{\beta} \df y$ is non-zero when $ \alpha_i+ \beta_i$ is even for all $ 1\le i\le n$. Moreover, one has
\begin{equation}
    \begin{aligned}\label{eq_4.3}
  0&= \int_{\R^{n}} e^{- |y|^2}  a_{\alpha} y^{\alpha}\, y^{\beta} \df y \\&=a_{\alpha_1\cdots \alpha_n} \,  \prod_{i=1}^n \Gamma(\frac{\alpha_i+ \beta_i+1}{2})  \quad 
  \mbox{when $ \alpha_i+ \beta_i$ is even for all $ 1\le i\le n$}.
\end{aligned}
\end{equation}
Thus proving Lemma \ref{lm:seprating tensor_n_bigger_3} is equivalent to show that \eqref{eq_4.3} implies $ a_{\alpha}=0$ for any multi-index $ \alpha$ with $ \abs{\alpha}=\abs{\beta}$.

 This holds trivially when $n=1$, because $a_{\alpha}$ is a scalar function. We next argue by induction and assume that
\eqref{eq_4.3} implies $ a_{\alpha}=0$ in any dimension $n=N$. Then for $n=N+1$  we denote $ \alpha= (\alpha_1,\alpha_2,\ldots, \alpha_{N+1})=(\alpha', \alpha_{N+1})$ and $ \beta=(\beta_1,\beta_2, \ldots, \beta_{N+1}) = (\beta', \beta_{N+1})$, where $ \abs{\alpha}=k= \abs{\beta}$. For any $ \alpha_{N+1}$ we define
\begin{align}
   f_{\alpha_{N+1}} := a_{\alpha_1\alpha_2\ldots 
   \alpha_N \alpha_{N+1}}   \prod_{i=1}^n \Gamma(\frac{\alpha_i+ \beta_i+1}{2}).
\end{align}
By assumption we have that  \eqref{eq_4.3}
 is true for $n=N+1$. Hence,
 \begin{align}
\Gamma(\frac{\alpha_{N+1}+\beta_{N+1}+1}{2}) f_{\alpha_{N+1}}=0  \quad \mbox{for all $ 0\le \beta_{N+1} \le k$ and  $\alpha_{N+1}+\beta_{N+1}$ is even}. 
\end{align}
Next we assume that $ k=2p$ and $ \alpha_{N+1}+\beta_{N+1}= 2l$ where $ 0\le l\le 2k.$ Then the preceding system of equations can be written as 
\begin{align}
   \sum_{2l-\beta_{N+1}=0}^{2p} \Gamma(l+\frac{1}{2}) f_{2l-\beta_{N+1}}=0.
\end{align}
Further denoting $ \beta_{N+1} =2s$ from above we obtain 
\begin{align}
     \sum_{l=s}^{p+s} \Gamma(l+\frac{1}{2}) f_{2l-2s}=0\quad 0\le s\le p.
\end{align}
This can be written as a matrix equation $A_{p+1}X=0$, where $A_{p+1}$  is  given by
\begin{align}
    A_{p+1}= \begin{pmatrix}
        \Gamma(\frac{1}{2}) & \Gamma(\frac{3}{2}) & \cdots &\Gamma(p-\frac{1}{2})& \Gamma(p+\frac{1}{2})\\
        \Gamma(\frac{3}{2}) & \Gamma(\frac{5}{2}) & \cdots &\Gamma(p+\frac{1}{2})&\Gamma(p+\frac{3}{2}) \\
        \vdots&\vdots& \ddots& \vdots&\vdots\\
      \Gamma(p+\frac{1}{2})& \Gamma(p+\frac{3}{2})&\cdots& \Gamma(2p-\frac{1}{2})&  \Gamma(2p+\frac{1}{2}) 
    \end{pmatrix}.
\end{align}
We next show that $\det (A_{p+1}) \neq 0$. To achieve this we perform the following column operation: $(j+1)$-th column - $j$-th column  $ \times (j-\frac{1}{2})$ for $ 1\le j\le p$. This implies
\begin{align*}
    \det (A_{p+1}) = p!\, \Gamma(\frac{1}{2})\, \det (A_p),
\end{align*}
 where $A_p$ is given by 
 \begin{align*}
     A_p = \begin{pmatrix}
        \Gamma(\frac{3}{2}) & \Gamma(\frac{5}{2}) & \cdots &\Gamma(p+\frac{1}{2}) \\
        \Gamma(\frac{5}{2}) & \Gamma(\frac{7}{2})& \cdots & \Gamma(p+\frac{3}{2})\\
        \vdots&\vdots& \ddots& \vdots\\
      \Gamma(p+\frac{1}{2})& \Gamma(p+\frac{3}{2})&\cdots& \Gamma(2p-\frac{1}{2}) 
     \end{pmatrix}.
 \end{align*}
 Next we do the following column operation: $(j+1)$-th column - $j$-th column  $ \times (j+\frac{1}{2})$ for $ 1\le j\le p-1$. This further entails
 \begin{align*}
     \det(A_{p})=  (p-1)!\, \Gamma(\frac{3}{2})\, \det (A_{p-1})\implies \det(A_{p+1})= p!\, (p-1)!\, \Gamma(\frac{1}{2})\, \Gamma(\frac{3}{2})\, \det(A_{p-1}).
 \end{align*}
Continuing in this way after finitely many steps we obtain 
\begin{align*}
 \det(A_{p+1})= \prod_{j=1}^{p-1}  (p+1-j) !\,\Gamma(j-\frac{1}{2})\,\det(A_2),
\end{align*}
where $ A_2= \begin{pmatrix}
 \Gamma(p-\frac{1}{2})& \Gamma(p+\frac{1}{2})\\
  \Gamma(p+\frac{1}{2})& \Gamma(p+\frac{3}{2})
\end{pmatrix}$. Since $\det(A_2)= \Gamma(p-\frac{1}{2})\times \Gamma(p+\frac{1}{2}) $, this further entails 
\begin{align*}
    \det(A_{p+1})=  \prod_{j=1}^{p+1}  (p+1-j) !\,\Gamma(j-\frac{1}{2}).
\end{align*}
This clearly shows that $\det(A_{p+1})\neq 0$ and   $f_{\alpha_{N+1}} =0$ for any fixed $ 0\le \alpha_{N+1}\le k$.
Furthermore, choosing $\alpha_{N+1}=\beta_{N+1}$ we deduce that
\begin{align}
  0=a_{\alpha_1\alpha_2\ldots 
   \alpha_N \alpha_{N+1}}   \prod_{i=1}^N \Gamma(\frac{\alpha_i+ \beta_i+1}{2}),  
\end{align}
for all multi-index $\alpha' $ and $\beta'$ in dimension $N$ with $ \abs{\alpha'}=\abs{\beta'}$. Therefore by induction hypotheses, this implies 
\begin{align}
     a_{\alpha_1\alpha_2\ldots 
   \alpha_N \alpha_{N+1}}=0, \quad\mbox{for any multi-index $ \alpha$ with $ \abs{\alpha}=k$.}
\end{align}
This completes the proof.
\end{proof}

\subsection{Proof of Theorem \ref{th:main_result}}\label{subsec:main_result_thm_1.2}
We begin by recalling that $M$ is an admissible manifold in the sense of Definition \ref{def_admisssible}. Therefore, we fix $p\in M\setminus\Omega$ and consider $\lambda>0$, $\lambda':=\sqrt{1-\lambda^2}$, and $\xi_0,\xi_1,\xi_2$ as in Definition \ref{def_admisssible} with $\xi_0:=\lambda'\xi_1+\lambda\xi_2$. For $j=0,1,2$ we consider Fermi coordinates along $\gamma_{p,\xi_j}$ such that $p=(0,0\cdots,0)$ in these coordinates. We construct an approximate Gaussian beam $U_\tau^{(j)}$ in these coordinates along $\gamma_{p,\xi_j}$, and denote the corresponding exact solution of the Schr\"odinger equation by $u_\tau^{(j)}=U_\tau^{(j)}+R_\tau^{(j)}$, where the remainder $R_\tau^{(j)}$ is constructed as in \eqref{eq_rmdr}.

We again let $\eta\in C^\infty_0(M)$ be such that $\eta=1$ in $M\setminus\Omega$, and consider the sources $f^{(j)}_\tau=(i\p_t+\Delta+V)(\eta u_\tau^{(j)})$. Then the function $\eta u^{(j)}_\tau$ solves

\begin{align}
\begin{cases}
i \p_t \mathcal{U}^{(j)}_\tau + \Delta \mathcal{U}^{(j)}_\tau + V \mathcal{U}^{(j)}_\tau = f^{(j)}_\tau & \text{on $(0,T) \times M$,}
\\
\mathcal{U}^{(j)}_\tau|_{x \in \p M} = 0,
\\
\mathcal{U}^{(j)}_\tau|_{t = 0} = 0.
\end{cases}
    \end{align}
As before, we choose $\kappa$ large enough that $H^{2\kappa}_0$ is a Banach algebra. It can then be shown that the source $f^{(j)}_\tau\in H^{2\kappa}_0$ is determined by the source-to-solution map $L_{\beta,V}$, up to any error $O(\tau^{-K})$ in the $H^{2\kappa}$-norm. The proof of this fact follows, as in the case of geometric optics, from the fact that $[(\I\p_t+\Delta+V),\eta]=2\pair{d\eta,d}_g+\Delta\eta, $ whence $f^{(j)}_\tau$ is given by the expression $f^{(j)}_\tau=2\pair{d\eta,dU^{(j)}_\tau}_g+\Delta\eta U^{(j)}_\tau+2\pair{d\eta,dR^{(j)}_\tau}_g+\Delta\eta R^{(j)}_\tau. $ We note that the first two terms on the right-hand side are uniquely determined by $L_{\beta,V}$ using Lemma \ref{lm:determining_amplitudes}. 
On the other hand, we can apply estimate \eqref{!GBRest} to the last two terms to conclude that they are $O(\tau^{\frac{3-N}{2}+4\kappa+2})$ in the $H^{2\kappa}$-norm.

We next implement the second order linearization technique. To this end, let $\varepsilon_1,\varepsilon_2>0$ be small, we fix $\varepsilon=(\varepsilon_1,\varepsilon_2)$ and define the source term $f_\tau=\varepsilon_1f^{(1)}_{\lambda'\tau}+\varepsilon_2f^{(2)}_{\lambda\tau}$. For small enough $\varepsilon$, it holds that $f_\tau\in\mathcal{H}$, and we observe that\[-\frac{1}{2}\p_{\varepsilon_1}\p_{\varepsilon_2}L_{\beta,V}f_\tau|_{\varepsilon=0}=w_\tau|_{(0,T)\times\Omega},\]with $w_\tau$ the solution of the linear Schr\"odinger equation\begin{align}\begin{cases}i\p_tw_\tau+\Delta w_\tau+Vw_\tau=\beta\,\mathcal{U}^{(1)}_{\lambda'\tau}\,\mathcal{U}^{(2)}_{\lambda\tau}\\w_\tau|_{x \in \p M} = 0\\w_\tau|_{t=0} = 0.
\end{cases}\end{align}
Then it follows that\begin{equation}\label{!before}-\frac{1}{2}\int_{(0,T)\times\Omega}\p_{\varepsilon_1}\p_{\varepsilon_2}L_{\beta,V}f_\tau|_{\varepsilon=0}\overline{f^{(0)}_\tau}\df V_g\, \df t=\int_{(0,T)\times M}w_\tau\overline{(i\p_t+\Delta+V)\mathcal{U}^{(0)}_\tau}\df V_g\df t.\end{equation}Recall that $L_{\beta,V}$ is a continuous map from $\mathcal{H}$ in to $H^{2\kappa}_0$, and that $f_\tau\in\mathcal{H}$ is determined by $L_{\beta,V}$ up to $O(\tau^{-\frac{3-N}{2}+4\kappa+2})$. Thus $L_{\beta,V}$ determines the left-hand side of \eqref{!before} up to this error. Integrating the right-hand side by parts, we observe that it is given by\begin{equation}\label{!after}\int_{(0,T)\times M}\beta\overline{\mathcal{U}^{(0)}_\tau}\mathcal{U}^{(1)}_{\lambda'\tau}\mathcal{U}^{(2)}_{\lambda\tau}\, \df V_g\df t.
\end{equation}
We would like to approximate $\mathcal{U}^{(j)}_\tau$ by $\eta U^{(j)}_\tau$ in \eqref{!after}. Using the bound \eqref{!GBRest} for $\eta R^{(j)}_\tau$, together with the estimate \eqref{!estimates} for $U^{(j)}_\tau$ and the fact that $H^{2\kappa}_0$ is a Banach algebra, we conclude that $L_{\beta,V}$ determines the integral
\[\int_{(0,T)\times M}\beta\eta^3 \overline{U^{(0)}_\tau} U^{(1)}_{\lambda'\tau} U^{(2)}_{\lambda\tau} \df V_g\df t,\]
up to an error of $O(\tau^{-K})$ order, provided that we have chosen the order $N$ of the Gaussian beams to be sufficiently large. Recall that $\eta=1$ in $\supp\big(\overline{U^{(0)}_\tau}U^{(1)}_\tau U^{(2)}_\tau\big)$ and that $\phi(t)$ in the definition of $a^{(j)}_\tau$ is arbitrary. Thus, the source-to-solution map determines, for all $t\in(0,T)$, the integral\[\mathcal{I}=\int_M\beta(t,\cdot)\overline{U^{(0)}_\tau}(t,\cdot)U^{(1)}_{\lambda'\tau}(t,\cdot)U^{(2)}_{\lambda\tau}(t,\cdot)\df V_g,\]up to any polynomial error $O(\tau^{-K})$. Henceforth, we suppress this $t$-dependence in our notation. In order to recover the pair $(\beta,V)$, we shall use the method of stationary phase to analyze the integral $\mathcal{I}$. Therefore, let us begin with the following result.

\begin{lemma}
 The function\[\Psi=-\overline{\psi^{(0)}}+\lambda'\psi^{(1)}+\lambda\psi^{(2)}\]satisfies the following two conditions:
\begin{enumerate}[(i)]
\item$\nabla_g\Psi(p)=0$.
\item $\det(\nabla_g^2 \Psi)(p)\neq 0$.
\end{enumerate}
Where $\psi^{(j)}$ are approximate Gaussian beam solutions along the geodesics $\gamma_{p,\xi_j}$  for $j=0,1,2$, and recall that these geodesics intersect only at $p\in M$.
\end{lemma}

\begin{proof}
First of all, we note that $\nabla_g\psi^{(j)}(p)=\xi_j$ for $j=0,1,2$. Thus, we observe that\[\nabla_g\Psi(p)=-\xi_0+\lambda'\xi_1+\lambda\xi_2,\]which vanishes by the definitions of $\lambda$, $\lambda'$ and $\xi_0$. For the last claim, it suffices to show that $D^2\Im\Psi(X,X)>0$ for $X\in T_pM\setminus 0$. We first note that\[\Im\Psi=\Im\psi^{(0)}+\lambda'\Im\psi^{(1)}+\lambda\Im\psi^{(2)},\]
which implies that $D^2\Im\Psi(X,X)\geq0$. Indeed, in the system of Fermi coordinates along $\gamma_{p,\xi_j}$, it holds that
\[D^2\Im\psi^{(j)}|_{\gamma_{p,\xi_j}}=\begin{pmatrix}0&0\\0&H_j\end{pmatrix},\] 
where $H_j$ is the appropriate matrix solving \eqref{!riccati} on $\gamma_{p,\xi_j}$. Therefore, using \eqref{!psi01} and $\Im(\psi)(r,y)\geq c|y|^2$  we observe that for $j=0,1,2$ we have
\[\begin{split}D^2\Im\psi^{(j)}(X,X)&\geq0\quad\forall X\in T_pM,\\D^2\Im\psi^{(j)}(X,X)&>0\quad\forall X\in T_pM\setminus\vspan(\xi^{(j)}).\end{split}\]
Since $\xi_1$ and $\xi_2$ are linearly independent, the claimed result follows.
\end{proof}

We now turn to analyzing the integral $\mathcal{I}$. Since $\gamma_{p,\xi_0}\cap\gamma_{p,\xi_1}\cap\gamma_{p,\xi_2}=\{p\}$, we observe that the product\[\overline{U^{(0)}_\tau}U^{(1)}_{\lambda'\tau}U^{(2)}_{\lambda\tau}=e^{i\tau\Psi}a^{(0)}_\tau a^{(1)}_{\lambda'\tau}a^{(2)}_{\lambda\tau}\]is supported in a neighbourhood of $p$. We can expand the amplitudes $a^{(j)}_\tau$ in terms of functions $v^{(j)}_k$ as in \eqref{eq transport}, and apply the method of stationary phase (for example, see \cite{Hormander_1}*{Theorem 7.7.5}) to the integral $\mathcal{I}$ termwise after expanding. Thus we recover\[\begin{split}&\tau^{\frac{n}{2}}e^{i\tau\Psi(p)}\mathcal{I}=C_0\beta(p)v^{(0)}_0v^{(1)}_0v^{(2)}_0(p)\\+&\tau^{-1}C_1\beta(p)\Big(v^{(0)}_1v^{(1)}_0v^{(2)}_0(p)+\frac{1}{\lambda'}v^{(0)}_0v^{(1)}_1v^{(2)}_0(p)+\frac{1}{\lambda} v^{(0)}_0v^{(1)}_0v^{(2)}_1(p)\Big)+O(\tau^{-2}),\end{split}\]
where $C_0,C_1$ are known constants which do not depend on the pair $(\beta,V)$. In particular, by recalling that $v^{(j)}_0(p)=1$, we are able to recover $\beta(p)$ from the first term of the above. Since $p\in M\setminus\Omega$ was arbitrary, this is sufficient to recover $\beta$ in its entirety. On the other hand, since $\beta$ is non-zero in $\supp(V)$, we recover from the second term the quantity\[v^{(0)}_1(p)+\frac{1}{\lambda'}v^{(1)}_1(p)+\frac{1}{\lambda} v^{(2)}_1(p).\] 
Then, since $\lambda'=\sqrt{1-\lambda^2}$ we can, in fact, obtain the quantity\[\lim_{\lambda\rightarrow0}\ \lambda\Big(v^{(0)}_1(p)+\frac{1}{\lambda'}v^{(1)}_1(p)+\frac{1}{\lambda} v^{(2)}_1(p)\Big)=v^{(2)}_1(p).\] Finally, using the splitting \eqref{!split} together with the fact that $b_{1,0}$ does not depend on the potential $V$, we recover $c_{1,0}$ evaluated at $p$, which, letting $p=\gamma(r_1)$, we can compute to be\begin{equation}\label{!last1}c_{1,0}(0)=\frac{i}{2}\int_{r_0}^{r_1}V(s,0)ds\end{equation}in the Fermi coordinates along $\gamma_{p,\xi_2}$. We can recover $V(p)$ by differentiating \eqref{!last1} with respect to the upper limit of integration, and this completes the proof of Theorem \ref{th:main_result}.

\section*{acknowledgements}
The authors are thankful for Ali Feizmohammadi on discussions that were crucial for the ideas used in Section $3.2$. M.L., L.O., M.S were
supported by the Finnish Centre of Excellence of Inverse Modelling and Imaging. AT was supported by EPSRC DTP studentship EP/N509577/1. M.L was also partially supported by Academy of Finland, grants 284715, 312110 and M.S. was supported by ERC.

 \end{document}